\newcommand{\codim}{\mathop{\rm codim}\nolimits}
\newcommand{\pt}{\mathrm{pt}}
\newcommand{\cone}{\mathop{\mathrm{cone}}\nolimits}
\newcommand{\Lk}{\mathop{\mathrm{link}}\nolimits}
\newcommand{\Hom}{\mathop{\mathrm{Hom}}\nolimits}
\newcommand{\T}{\mathcal{T}}
\newcommand{\B}{\mathcal{B}}
\newcommand{\Q}{\mathbb{Q}}
\newcommand{\CQ}{\mathcal{Q}}
\newcommand{\Z}{\mathbb{Z}}
\newcommand{\R}{\mathbb{R}}
\newcommand{\hc}{h}
\newcommand{\St}{\mathop{\mathrm{star}}\nolimits}
\newtheorem{theorem}{Theorem}[section]
\newtheorem{propos}[theorem]{Proposition}
\newtheorem{cor}[theorem]{Corollary}
\theoremstyle{definition}
\newtheorem{defin}[theorem]{Definition}
\newtheorem{remark}[theorem]{Remark}
\numberwithin{equation}{section}
\author{Alexander A. Gaifullin}
\thanks{The work is partially supported by Russian Foundation for Basic Research 
(grants 08-01-00541 and 08-01-91855) and The Grant of The President of Russian Federation  (grant MK-4220.2009.1).}
\title[Combinatorial formulae for Pontryagin classes]{Configuration spaces, bistellar moves, and combinatorial formulae for the first Pontryagin class}
\date{}
\address{Moscow State University}
\address{Institute for Information Transmission Problems}
\begin{document}

\begin{abstract}
The paper is devoted to the problem of finding explicit combinatorial formulae for the Pontryagin classes. We discuss two formulae, the classical Gabrielov--Gelfand--Losik formula based on investigation of configuration spaces and the local combinatorial formula obtained by the author in 2004. The latter formula is based on the notion of a universal local formula introduced by the author and on the usage of bistellar moves. 
We give a brief sketch for the first formula and a rather detailed exposition for the second one. For the second formula, we also succeed to simplify it by providing a new simpler algorithm for decomposing a cycle in the graph of bistellar moves of two-dimensional combinatorial spheres into a linear combination of elementary cycles.   
\end{abstract}

\maketitle

\section{Introduction}

The problem of finding combinatorial formulae for the Pontryagin classes of triangulated manifolds goes back to a remarkable work~\cite{GGL75} by A.\,M.~Gabrielov, I.\,M.~Gelfand, and M.\,V.~Losik, where they constructed the first explicit combinatorial formula for the first rational Pontryagin class. Later different combinatorial formulae were obtained in~\cite{GGL76}--\cite{Gai08}. Let us also mention that N.~Levitt and C.~Rourke~\cite{LR78} proved the existence of local combinatorial formulae for all polynomials in rational Pontryagin classes without constructing explicit formulae. The simplest of all known combinatorial formulae for the first rational Pontryagin class was obtained by the author in 2004~\cite{Gai04}. The survey~\cite{Gai05} is devoted to the comparison of different combinatorial formulae for the Pontryagin classes. 

Before passing to a detailed discussion of combinatorial formulae for the Pontryagin classes 
we shall briefly discuss a simpler problem of combinatorial computation of the Stiefel--Whitney classes.
The following assertion was conjectured by E.~Stiefel~\cite{Sti36} and was proved by  H.~Whitney~\cite{Whi40}.

\begin{theorem}\label{theorem_SW}
Suppose $K$ is a closed $n$-dimensional combinatorial manifold, and $K'$ is its first barycentric subdivision. Denote by $W_k$ the sum modulo~$2$ of all $k$-dimensional simplices of~$K'$. Then the simplicial chain~$W_k$ is a cycle with coefficients in~$\Z_2$ and represents the Poincar\'e dual of the Stiefel--Whitney class~$w_{n-k}(K)$.
\end{theorem}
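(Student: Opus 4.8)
The plan is to establish two things separately: that $W_k$ is a $\Z_2$-cycle (a purely combinatorial count) and that the class $[W_k]\in H_k(K;\Z_2)$ is the Poincar\'e dual of $w_{n-k}(K)$ (which I would extract from the obstruction-theoretic description of Stiefel--Whitney classes). The second part carries the real content.

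\textbf{The cycle property.} A $(k-1)$-simplex of $K'$ is a flag $\sigma_0\subsetneq\dots\subsetneq\sigma_{k-1}$ of simplices of $K$, and the $k$-simplices of $K'$ containing it are obtained by inserting one further simplex $\rho$ of $K$ into this flag so that it remains strictly increasing. The insertions with $\emptyset\ne\rho\subsetneq\sigma_0$ number $2^{\dim\sigma_0+1}-2$, and those with $\sigma_i\subsetneq\rho\subsetneq\sigma_{i+1}$ number $2^{\dim\sigma_{i+1}-\dim\sigma_i}-2$; all of these are even. The insertions with $\rho\supsetneq\sigma_{k-1}$ are in bijection with the nonempty faces of $\Lk_K\sigma_{k-1}$, which is a combinatorial $(n-k)$-sphere because $K$ is a closed combinatorial manifold; hence their number is congruent modulo $2$ to $\chi(S^{n-k})$, which is even. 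So every $(k-1)$-simplex of $K'$ lies in an even number of $k$-simplices, i.e. $\partial W_k=0$ over $\Z_2$.

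\textbf{Identification with $w_{n-k}$.} Equip $K$ with its tangent microbundle (equivalently, with a smooth structure on a regular neighbourhood of a PL embedding of $K$ into Euclidean space), for which the Stiefel--Whitney classes and the relevant obstruction theory are available; the case $k=n$ is trivial, since $W_n$ is the fundamental class modulo $2$, so assume $0\le k\le n-1$. By Steenrod's obstruction theory, $w_{n-k}(K)$ is the mod~$2$ reduction of the primary obstruction, lying in $H^{n-k}(K;\pi_{n-k-1}(V_{k+1}(\R^n)))$, to the existence of a field of $k+1$ linearly independent tangent vectors on $K$. I would construct such a frame field from the barycentric subdivision: on the interior of each simplex $[\hat\sigma_0,\dots,\hat\sigma_r]$ of $K'$ with $r\ge k+1$, take the first $k+1$ of the linearly independent directions $\hat\sigma_1-\hat\sigma_0,\dots,\hat\sigma_r-\hat\sigma_{r-1}$, and patch the resulting partial frames smoothly across common faces. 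The resulting $(k+1)$-frame is then defined exactly on the complement of the $k$-skeleton of $K'$, whose underlying space is the support $|W_k|$. It remains to evaluate the associated obstruction cocycle on the cells of the dual block decomposition $\{D(\sigma)\}_{\sigma\in K}$ of $K$, where $D(\sigma)$ is the union of the simplices $[\hat\sigma,\hat\tau_1,\dots,\hat\tau_r]$ of $K'$ with $\sigma\subsetneq\tau_1\subsetneq\dots\subsetneq\tau_r$, an $(n-\dim\sigma)$-cell: on the $(n-k)$-cell $D(\sigma)$ dual to a $k$-simplex $\sigma$ of $K$, the frame field degenerates only near the barycentre $\hat\sigma$, where $D(\sigma)$ is transverse to $|W_k|$, and the local index of the degeneration is $\pm1$, so the obstruction value is $1$. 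Thus the obstruction cocycle agrees modulo $2$ with the cellular cochain Poincar\'e dual to $W_k$, which gives $[W_k]=w_{n-k}(K)\cap[K]$.

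\textbf{Main obstacle.} The delicate point is the construction in the second part: one must make the patching precise, verify that the degeneracy locus of the frame field is exactly $|W_k|$ with the correct transversality along every dual cell, and compute the local obstruction index there. If one prefers to avoid an explicit frame field, the same conclusion can be obtained following Halperin and Toledo, who characterise the Stiefel--Whitney homology classes by their naturality and multiplicative properties together with a direct computation on test manifolds such as products of real projective spaces, where both $[W_k]$ and $w_{n-k}(K)\cap[K]$ can be written down by hand.
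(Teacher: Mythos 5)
A point of calibration first: the paper does not prove Theorem~\ref{theorem_SW} at all. It is quoted as classical background, with the remark that the detailed proof is due to Halperin and Toledo and proceeds by constructing explicit tangent vector fields $F_1,\ldots,F_n$ on $K$ such that $F_1,\ldots,F_p$ are linearly independent off the $(n-p)$-skeleton of $K'$ and $F_p$ has index $\pm1$ modulo $F_1,\ldots,F_{p-1}$ at the barycentre of each $(n-p)$-simplex of $K'$. Your outline follows exactly that strategy, so there is no divergence of method to discuss; the question is only whether your sketch closes the argument, and it does not. Your first part is fine: the count of $k$-simplices of $K'$ containing a fixed flag $\hat\sigma_0,\ldots,\hat\sigma_{k-1}$ is correct, with the one slip that $\Lk_K\sigma_{k-1}$ is a combinatorial sphere of dimension $n-1-\dim\sigma_{k-1}$, not necessarily $n-k$ (the top element of the flag need not have dimension $k-1$); this is harmless because every combinatorial sphere has an even number of non-empty simplices, its Euler characteristic being $0$ or $2$.

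The genuine gap is in the second part, and it sits exactly where you flag the ``delicate point.'' The frame field you propose --- on the interior of $[\hat\sigma_0,\ldots,\hat\sigma_r]$ take the first $k+1$ of the vectors $\hat\sigma_1-\hat\sigma_0,\ldots,\hat\sigma_r-\hat\sigma_{r-1}$ --- is not continuous on the complement of the $k$-skeleton of $K'$, so ``patch the resulting partial frames smoothly across common faces'' is not a finishing touch but the whole theorem. Already for $k=0$: approaching the open edge $[\hat\sigma_0,\hat\sigma_2]$ from the interior of $[\hat\sigma_0,\hat\sigma_1,\hat\sigma_2]$ the field tends to $\hat\sigma_1-\hat\sigma_0$, while on that edge your recipe prescribes $\hat\sigma_2-\hat\sigma_0$; both strata lie off the $0$-skeleton of $K'$. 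Any honest patching must be shown (i) to exist, (ii) to degenerate only on the $k$-skeleton of $K'$, and (iii) to meet each dual cell $D(\sigma)$, $\dim\sigma=k$, with local obstruction index $\pm1$ at $\hat\sigma$ --- and these three statements are precisely the Whitney/Halperin--Toledo construction (their fields are built from the barycentric coordinate functions by a careful averaging, and the index computation is a genuine computation, not a transversality remark). Your fallback --- the axiomatic characterisation plus evaluation on products of projective spaces --- is a legitimate alternative route, but as written it is a citation of Halperin--Toledo rather than a proof. So the proposal is an accurate plan coinciding with the strategy the paper attributes to the literature, with the central construction and index computation left unestablished.
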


H.~Whitney had not published a detailed proof of this theorem. An accurate proof of it was proved only in 1972 by S.~Halperin and D.~Toledo~\cite{HT72}. Their proof as well as the original proof by H.~Whitney is based on an explicit construction of tangent vector fields $F_1,\ldots,F_n$ on~$K$ such that the fields $F_1,\ldots, F_p$ are linearly independent off the $(n-p)$-skeleton of~$K'$ and the index of the vector field~$F_p$ modulo $F_1,\ldots,F_{p-1}$ at the barycentre of each $(n-p)$-dimensional simplex of~$K'$ is equal to~$\pm 1$.
Another proof of Thorem~\ref{theorem_SW} based on quite different ideas was obtained by J.~Cheeger~\cite{Che70}.

In this paper we consider two combinatorial formulae for the first rational Pontryagin class, the classical Gabrielov--Gelfand-Losik formula~\cite{GGL75} and the formula obtained by the author in~\cite{Gai04}. Indeed, these two formulae are at the moment the only known formulae for the Pontryagin classes that can be used for real computation. All known formulae for higher Pontryagin classes (see~\cite{Gab78}, \cite{GMP92},\cite{Che83},~\cite{Gai08}) cannot be used for real computation. The formula due to A.\,M.~Gabrielov~\cite{Gab78} can be applied to a very small class of triangulations of manifolds only. The formula due to I.\,M.~Gelfand and R.~MacPherson is not purely combinatorial~\cite{GMP92}, since it computes the Pontryagin classes of a manifold from a given triangulation with a given \textit{smoothing} rather than from a given triangulation only. Though the formula due to the author~\cite{Gai08} is purely combinatorial, it is extremely complicated, hence, it cannot be used for real computation even in simplest cases. We shall mention apart an analitic approach developing the results of Atiyah--Patodi--Singer~\cite{APS73} which allowed J.~Cheeger~\cite{Che83} to obtain formulae for the Pontryagin classes of triangulated manifolds in terms of the spectra of the Laplace operators in spaces of $L_2$-forms on incomplete Riemannain manifolds with locally flat metrics.        
These formulae can be applied to any combinatorial manifold. Nevertheless, the spectra of the Laplace operators also lack an explicit combinatorial description. Hence Cheeger's formulae should be regarded as important relations between topological and analitic objects rather than as formulae for combinatorial computation of the Pontryagin classes. 

Let us now describe main ideas of the Gabrielov--Gelfand--Losik formula~\cite{GGL75} and the author's formula~\cite{Gai04}. For the Gabrielov--Gelfand--Losik formula there are two almost equivalent approaches. The original approach in~\cite{GGL75} is based on endowing of a triangulated manifold with locally flat connections. Another approach due to R.~MacPherson~\cite{MP77} is based on the construction of homology Gaussian mapping for a combinatorial manifold. We shall describe the ideas of MacPherson's approach, since it is more geometrical.

First suppose that $M^m\subset\R^N$ is an oriented smooth closed manifold embedded into a Euclidean space of large dimension. Consider the Gaussian mapping 
$g:M\to G_m(\R^N)$ taking every point~$x\in M$ to the linear subspace of~$\R^N$ parallel to the tangent space to~$M$ at~$x$. Here $G_m(\R^N)$ is the Grassmannian manifolds of
$m$-dimensional subspaces of~$\R^N$. For any homogeneous polynomial  $F\in \R[p_1,p_2,\ldots,p_{[m/2]}]$ of degree $n=4k$, there is a unique $O(N)$-invariant interior $n$-form~$P_F$ on~$G_m(\R^N)$ representing the class $F(p(\gamma))=F\bigl(p_1(\gamma),p_2(\gamma),\ldots,p_{[m/2]}(\gamma)\bigr)$ in the de Rham cohomology,
where $\gamma$ is the tautological vector bundle over~$G_m(\R^N)$. (Here and always in the sequel under a \textit{homogeneous} polynomial of degree~$n$ in Pontryagin classes we mean a polynomial all whose monomials have degree~$n$, where the degree of every variable~$p_i$ is supposed to be~$4i$.) 
Then the form~$g^*P_F$ represents the class~$F(p(M))$ in the de Rham cohomology.

Obviously, for combinatorial manifolds, there is no hope to construct a natural \textit{continuous} mapping to~$G_m(\R^N)$. Indeed, if such mapping had existed, we could have defined \textit{integral} Pontryagin classes of a combinatorial manifold to be the pullbacks of the Pontryagin classes of the tautological bundle~$\gamma$. This is certainly impossible, since integral Pontryagin classes are not invariant under piecewise linear homeomorphisms. However, for combinatorial manifolds, it is possible to construct a \textit{homology} Gaussian mapping up to dimension~$4$, which yields the required combinatorial formula. It is interesting that the formula obtained is essentially non-local. The reason is that the definition of the homology Gaussian mapping is non-local, since it depends on the preliminarily chosen additional combinatorial structure on the combinatorial manifold. A local formula is obtained by certain special averaging of this additional structure, which makes the formula much more complicated (see~\cite{GGL76},~\cite{MP77}). Notice that the Gabrielov--Gelfand--Losik formula cannot be applied for an arbitrary combinatorial manifold. It can be applied only for a smaller class of  so-called \textit{Brouwer manifolds} (for definition, see section~\ref{section_triangman}).

The author's formula~\cite{Gai04} is based on different ideas. Notice that in smooth case the form~$g^*P_F$ gives a \textit{local} formula for the polynomial~$F$ in Pontryagin classes. This means that the form~$g^*P_F$ at every point depends on the metrics on~$M$ in the neighbourhood of this point only. Hence it seems natural to hope that a cycle representing the Poincar\'e dual of a given polynomial in Pontryagin classes can be given by a \textit{universal local formula}
\begin{equation*}
f_{\sharp}(K)=\sum_{\sigma\in
K,\,\codim\sigma=n}f(\Lk\sigma)\sigma,
\end{equation*}
where $f$ is a chosen rational-valued function on the set of isomorphism classes of oriented $(n-1)$-dimensional combinatorial spheres independent of the combinatorial manifold~$K$. In~\cite{Gai04}, the author suggested the following approach to constructing an explicit combinatorial formula for the first Pontryagin class. For $n=4$, it is possible to use the theory of bistellar moves to describe explicitly \textit{all} functions~$f$ such that the chain~$f_{\sharp}(K)$ is a cycle for every combinatorial manifold~$K$. Any such function~$f$ gives a formula for the first Pontryagin class multiplied by some rational number. The obtained explicit combinatorial formula is automatically local and can be applied to any combinatorial manifold.

This paper is organized in the following way. Section~\ref{section_triangman} contains main definitions and notation and necessary background information on triangulated manifolds. In section~\ref{section_GGL} we discuss configuration spaces, which form the most important ingredient of the Gabrielov--Gelfand--Losik formula, and give a very brief sketch of this formula. Sections~\ref{section_locform}--\ref{section_explicit} are devoted to the author's formula obtained in~\cite{Gai04}. In section~\ref{section_locform}, we construct a differential graded algebra~$\T_*$ of oriented combinatorial spheres, introduce the notion of a universal local formula, and formulate theorems of uniqueness and existence of local formulae for polynomials in rational Pontryagin classes. In section~\ref{section_bistellar} we give necessary information on bistellar moves and construct a graph~$\Gamma_n$ whose vertices are isomorphism classes of oriented $n$-dimensional combinatorial spheres and whose edges are equivalence classes of bistellar moves. We also give a new simpler algorithm for decomposing a cycle of the graph~$\Gamma_2$ into a linear combination of elementary cycles (Proposition~\ref{propos_cycly}). This algorithm essentially simplifies the explicit combinatorial formula for the first Pontryagin class obtained in~\cite{Gai04}. The construction of this explicit formula is given in section~\ref{section_explicit}.

\section{Triangulated manifolds}

\label{section_triangman} This section contains a survey of necessary definitions and results on simplicial complexes and triangulated manifolds.

An \textit{abstract simplicial complex} on a vertex set~$V$ is a non-empty set~$K$ of subsets of~$V$ such that на $\tau\in K$ whenever $\sigma\in K$ and $\tau\subset\sigma$. In particular, the empty set~$\emptyset$ always belongs to~$K$. Elements of~$K$ are called \textit{simplices} of~$K$. The dimension of a simplex $\sigma\in K$ is, by definition, the cardinality of the set~$\sigma$ minus~$1$. We shall always assume that the set~$V$ is finite and all one-element subsets of~$V$ belong to~$K$. A \textit{subcomplex} of~$K$ is a subset~$L\subset K$ that is a simplicial complex on some vertex set $W\subset V$. A \textit{full subcomplex} of~$K$ spanned by a subset $W\subset V$ is a subcomplex consisting of all simplices $\sigma\in K$ such that $\sigma\subset W$. An \textit{isomorphism} of simplicial complexes $K_1$ and $K_2$ on vertex sets $V_1$ and $V_2$ respectively is a bijection  $f:V_1\to V_2$ such that $f(\sigma)\in K_2$ if and only if $\sigma\in
K_1$.  Isomorphism of simplicial complexes is denoted by the symbol~$\cong$. A \textit{join} of simplicial complexes $K_1$ and $K_2$ on vertex sets $V_1$ and $V_2$ respectively is a simplicial complex~$K_1*K_2$ on the vertex set $V_1\sqcup V_2$ consisting of all simplices $\sigma_1\cup\sigma_2$ such that $\sigma_1\in K_1$ and $\sigma_2\in K_2$. The \textit{cone} over a simplicial complex~$K$ is the simplicial complex
$\cone K=\pt*K$, where $\pt$ is a one-point simplicial complex. The \textit{star} of a simplex $\sigma\in K$ is the subcomplex  $\St\sigma\subset K$ consisting of all simplices~$\tau$ such that $\sigma\cup\tau\in K$.
The \textit{link} of a simplex $\sigma\in K$ is the subcomplex $\Lk\sigma\subset K$ consisting of all simplices~$\tau$ such that $\sigma\cup\tau\in K$ and $\sigma\cap\tau=\emptyset$. Obviously,  $\St\sigma\cong\sigma*\Lk\sigma$, where the simplicial complex consisting of the simplex~$\sigma$ and all its faces is also denoted by~$\sigma$. The set of vertices of a simplicial complex~$K$ will be denoted by~$V(K)$ and will be identified with the set of all zero-dimensional simplices of~$K$.

Let us embed the set $V(K)$ as an affinely independent subset into some space~$\R^N$. To each abstract simplex $\sigma\in K$ we assign the convex simplex of dimension $\dim\sigma$ with vertices at the corresponding points of~$\R^N$. The union of all such simplices is a polyhedron called a \textit{geometric realisation} of~$K$ and denoted by~$|K|$. A geometric realisation does not depend on the number~$N$ and the embedding~$V(K)\hookrightarrow\R^N$ up to a piecewise linear homeomorphism.

A simplicial complex~$K$ is called a \textit{simplicial manifold} if $|K|$ is a topological manifold.  (Under a manifold we always mean a closed manifold, that is, a compact manifold without boundary.) A simplicial complex is called an \textit{$n$-dimensional combinatorial sphere} if its geometric realisation is piecewise linearly homeomorphic to the boundary of an $(n+1)$-dimensional simplex. A simplicial complex is called an \textit{$n$-dimensional combinatorial manifold} if the link of every its vertex is an $(n-1)$-dimensional combinatorial sphere.  

It is well known that the link of every simplex of a combinatorial sphere is a  combinatorial sphere itself. (see, for example,~\cite{RS74}). Hence any combinatorial sphere is a combinatorial manifold and the link of every simplex of a combinatorial manifold is a combinatorial sphere. Any combinatorial manifold is a simplicial manifold. The inverse assertion is true for~$n\le 4$ and false for $n\ge 5$. The simplest example of a non-combinatorial triangulation of a $5$-dimensional sphere is a double suspension over an arbitrary triangulation of a three-dimensional homology sphere.

A \textit{flattening} of a simplicial manifold~$K$ at its simplex~$\sigma$ of codimension~$k$ is an embedding 
$\varphi:|\cone\Lk\sigma|\hookrightarrow\R^k$ such that $\varphi$ takes the cone vertex to the origin and is linear on every simplex of~$\cone\Lk\sigma$. A simplicial manifold is called a \textit{Brouwer manifold} if it admits such embedding for every its non-empty simplex~$\sigma$. Any Brouwer manifold is a combinatorial manifold. The inverse assertion is true for $n\le 3$ and false for $n\ge 4$, since, for every $n\ge 4$, there is an $(n-1)$-dimensional combinatorial sphere $L$ such that the complex $\cone L$ cannot be embedded into~$\R^n$ so that the restriction of the embedding to each simplex is linear. However, J.\,G.\,C.~Whitehead~\cite{Whi41} proved that any combinatorial manifold has a stellar subdivision that is a Brouwer manifold.

For simplicity we shall always work with oriented maifolds. All results can be easily extended to the case of non-orientable manifolds by replacing usual simplicial chains with so-called \textit{co-oriented simplicial chains} (see~\cite{GGL75}). Unless otherwise is stated, under an isomorphism of oriented combinatorial manifolds we shall alway mean an orientation preserving isomorphism and we shall always use the symbol~$\cong$ for orientation preserving isomorphisms only. For any oriented combinatorial manifold~$K$ we denote by $-K$ the combinatorial manifold~$K$ with the orientation reversed.

\section{Configuration spaces and the Gabrielov--Gelfand--Losik formula}\label{section_GGL}

The main ingredient of the Gabrielov--Gelfand--Losik formula is so-called \textit{configuration spaces}.

Let $L$ be an $(n-1)$-dimensional combinatorial sphere. By $\Xi(L)$ we denote the space of all embeddings $\iota:|\cone L|\hookrightarrow\R^n$ such that $\iota$ takes the cone vertex to the origin and the restriction of $\iota$ to every simplex of~$\cone L$ is linear. Suppose that the space $\Xi(L)$ is non-empty. The group~$GL_n(\R)$ acts naturally on~$\Xi(L)$. The orbit space $\Sigma(L)=\Xi(L)/GL_n(\R)$ is called the \textit{configuration space} of the combinatorial sphere~$L$.

It is easy to prove that the space~$\Sigma(L)$ is contractible if $\dim L=1$. It is also known that the space~$\Sigma(L)$ is arcwise connected~\cite{Cai44} and simply connected~\cite{Ho73} if $\dim L=2$. Almost nothing is known about the spaces~$\Sigma(L)$ for $\dim L\ge 3$. In particular, it is not known, whether the space $\Sigma(L)$ is always connected if~$\dim
L=3$.

The space $\Sigma(L)$ has natural stratification by degeneracies of the configuration
$$
\Sigma_0(L)\subset\Sigma_1(L)\subset\ldots\subset\Sigma(L).
$$
The space $\Sigma_0(L)$ consists of all orbits of embeddings~$\iota\in\Xi(L)$ such that the vectors $\iota(v)$, $v\in
V(L)$, are in general position.The space $\Sigma_1(L)$ consists of all orbits of embeddings~$\iota\in\Xi(L)$ such that the set of vectors $\iota(v)$, $v\in V(L)$, contains not more than one subset of $n$ linearly dependent vectors.

Suppose that $\iota\in \Xi(L)$ and $\rho$ is a simplex of~$L$ such that
$\codim\rho=k$. We denote by $W_{\rho}\subset\R^n$ the
$(n-k)$-dimensional vector subspace spanned by vectors
$\iota(v)$, where $v$ runs over all vertices of the simplex~$\rho$. We choose an isomorphism $\alpha:\R^n/W_{\rho}\to\R^k$ and consider the composite mapping
$$
\iota_{\rho}:|\cone\Lk\rho|\subset |\cone
L|\stackrel{\iota}{\hookrightarrow} \R^n\to
\R^n/W_{\rho}\stackrel{\alpha}{\to}\R^k.
$$
Obviously, $\iota_{\rho}\in\Xi(\Lk\rho)$. Besides, the
$GL_k(\R)$-orbit of the embedding~$\iota_{\rho}$ is independent on the choice of~$\alpha$. Hence we obtain a well-defined mapping
$\varphi_{L,\rho}:\Sigma(L)\to\Sigma(\Lk\rho)$. We shall need the following particular case of this construction. Suppose $K$ is a Brouwer manifold and $\sigma$ and $\tau$ are simplices of~$K$ of codimensions~$4$ and~$3$ respectively such that
$\sigma\subset\tau$; then the mapping 
$$\varphi_{\sigma\tau}=\varphi_{\Lk\sigma,\tau\setminus\sigma}:
\Sigma(\Lk\sigma)\to \Sigma(\Lk\tau)$$
is well defined.

To apply the Gabrielov--Gelfand--Losik formula to a Brouwer manifold~$K$ we should endow this manifold with a certain additional combinatorial structure~$\B$ called a \textit{hypersimplicial system}. There is an easy combinatorial construction that, for a given hypersimplicial system, produces the following cohomology classes of configuration spaces. This construction produces, for every codimension~$4$ simplex $\sigma\in K$,  a zero-dimensional cohomology class $\theta_{\sigma}\in
H^0(\Sigma_0(\Lk\sigma);\Q)$ and, for every codimensional~$3$ simplex  
$\tau\in K$, a one-dimensional cohomology class $\theta_{\tau}\in
H^1(\Sigma_1(\Lk\tau),\Sigma_0(\Lk\tau);\Q)$. We omit the definition of a hypersimplicial system and a construction of the cohomology classes~$\theta_{\sigma}$ and $\theta_{\tau}$.

The cycle~$\Gamma$ representing the Poincar\'e dual of the first rational Pontryagin class of an oriented Brouwer manifold~$K$ can now be computed in the following way.

1. Endow the manifold~$K$ with a hypersimplicial system. From this hypersimplicial system, compute the cohomology classes~$\theta_{\sigma}$ and~$\theta_{\tau}$.

2. For every codimension~$4$ simplex~$\sigma\in K$, choose a configuration $y_{\sigma}\in\Sigma_0(\Lk\sigma)$; for any codimension~$3$ simplex~$\tau\in K$, choose a configuration $y_{\tau}\in\Sigma_0(\Lk\tau)$; for every simplices  $\sigma\subset\tau$ of codimensions~$4$ and~$3$ respectively choose a curve
$y_{\sigma\tau}:[0,1]\to\Sigma_1(\Lk\tau)$ such that
$y_{\sigma\tau}(0)=y_{\tau}$ and
$y_{\sigma\tau}(1)=\varphi_{\sigma\tau}(y_{\sigma})$.

3. Now the required cycle~$\Gamma$ is given by
$$
\Gamma=\sum_{\sigma\in K,\,\codim\sigma=4}\left(
\left\langle\theta_{\sigma},y_{\sigma}\right\rangle+
\sum_{\tau\supset\sigma,\,\codim\tau=3}
\left\langle\theta_{\tau},y_{\sigma\tau}\right\rangle\right).
$$

This formula is not local. The coefficient of a simplex in  the cycle obtained depends not only on the combinatorial structure of the link of this simplex but also on the choice of several additional structures, namely, the hypersimplicial system, the points~$y_{\sigma}$ and~$y_{\tau}$, and the curves~$y_{\sigma\tau}$. In~\cite{GGL76}, a local formula is obtained by a certain averaging procedure over different choices of these additional structures. However, this procedure makes the formula much more complicated.  

\section{Algebra $\T_*$ and local formulae}
\label{section_locform}

For each $n\ge 1$, denote by $\T_n$ the Abelian group presented by generators~$\langle L\rangle$, where $L$ runs over all oriented $(n-1)$-dimensional combinatorial spheres, and relations $\langle L_1\rangle=\langle L_2\rangle$ whenever
$L_1\cong L_2$ and $\langle -L\rangle=-\langle L\rangle$. Obviously, the group~$\T_1$ is a cyclic group of order~$2$, the group~$\T_2$ is a direct sum of countably many cyclic subgroups of order~$2$, and the group~$\T_n$ is a direct sum of countably many cyclic subgroups of order~$2$ and countably many infinite cyclic subgroups for $n\ge 3$. The summands~$\Z_2$ correspond to isomorphism classes of combinatorial spheres that possess orientation reversing automorphisms; the summands~$\Z$ correspond to isomorphism classes of combinatorial spheres that do not possess orientation reversing automorphisms. (More precisely, the summands~$\Z$ correspond to pairs of such isomorphism classes differing by the orientation.) We put,~$\T_0=\Z$.

Define a grading decreasing differential $\partial:\T_n\to
\T_{n-1}$ by
$$
\partial\langle L\rangle=\sum_{v\in V(L)}\langle\Lk v\rangle,
$$
where the links of vertices are endowed with the induced orientations. The differential $\partial:\T_1\to\T_0$ is defined to be the zero homomorphism. It is easy to check that $\partial^2=0$.

The direct sum
$$
\T_*=\bigoplus_{n=0}^{\infty}\T_n
$$
is a supercommutative associative differential graded algebra (with the differential decreasing the grading) with respect to the multiplication given by
$$
\langle L_1\rangle\langle L_2\rangle=\langle L_1*L_2\rangle.
$$
It can be checked immediately that the Leibniz formula 
$$
\partial(\lambda\mu)=(\partial\lambda)\mu+(-1)^l\lambda\partial\mu
$$
holds, where $\lambda\in\T_l$ and $\mu\in\T_m$.

Let $\Lambda$ be a commutative ring with unit. We put
$$
\T^n(\Lambda)=\Hom(\T_n,\Lambda).
$$
Then
$$
\T^*(\Lambda)=\bigoplus_{n=0}^{\infty}\T^n(\Lambda)
$$
is a supercocommutative coassociative differential graded coalgebra with the differential increasing the grading. The differential is given by
$$
(\delta f)\bigl(\langle L\rangle\bigr)=(-1)^n\sum_{v\in
V(L)}f\bigl(\langle\Lk v\rangle\bigr),
$$
where $f\in\T^n(\Lambda)$.

Suppose that $f\in \T^n(\Lambda)$ and $K$ is an oriented $m$-dimensional combinatorial manifold. We define a simplicial chain 
$f_{\sharp}(K)\in C_{m-n}(K;\Lambda)$ by
\begin{equation}\label{eq_uniform}
f_{\sharp}(K)=\sum_{\sigma\in
K,\,\dim\sigma=m-n}f\bigl(\langle\Lk\sigma\rangle\bigr)\sigma.
\end{equation}
Here every simplex $\sigma$ is endowed with an arbitrary orientation; then the orientation of~$K$ induces the orientation of the link of~$\sigma$. The sign of the summand $f\bigl(\langle\Lk\sigma\rangle\bigr)\sigma$ is independent of the chosen orientation of~$\sigma$.

The first result on the existence of local formulae for the polynomials in rational Pontryagin classes was obtained in~1975 by N.~Levitt and C.~Rourke. In our notation this result can be formulated in the following way.

\begin{theorem}
\label{theorem_LR} Suppose $F\in \Q[p_1,p_2,\ldots]$ is a homogeneous polinomial of degree $n=4k$, where $\deg p_i=4i$. Then, for any  $m\ge
n$, there exists a function $f_m\in \T^n(\Q)$ such that, for every $m$-dimensional oriented combinatorial manifold~$K$, the chain 
$f_{m\sharp}(K)$ is a cycle representing the Poincar\'e dual of the cohomology class $F\bigl(p_1(K),p_2(K),\ldots\bigr)$,
where $p_i(K)$ are the rational Pontryagin classes of~$K$.
\end{theorem}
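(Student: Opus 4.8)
The plan is to deduce the theorem from the piecewise linear invariance of the rational Pontryagin classes together with a combinatorial version of the Gauss map. Recall that $p_i(K)\in H^{4i}(K;\Q)$ is pulled back from the universal class $p_i\in H^{4i}(BPL_m;\Q)$ along the classifying map of the piecewise linear tangent microbundle of~$K$, and that $BPL_m\to BO_m$ is a rational homotopy equivalence, so $H^*(BPL_m;\Q)=\Q[p_1,p_2,\ldots]$ through a range growing with~$m$. It is useful first to see what the two requirements of the theorem amount to. A direct computation gives
\[
\partial f_{\sharp}(K)=\sum_{\tau\in K,\ \codim\tau=n+1}\pm(\delta f)\bigl(\langle\Lk\tau\rangle\bigr)\,\tau,
\]
since the vertices of~$\Lk\tau$ index the codimension~$n$ simplices $\sigma\supset\tau$ and $\Lk_K(\tau\cup v)\cong\Lk_{\Lk_K\tau}(v)$. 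Hence $f_{\sharp}(K)$ is a cycle for every $m$-dimensional~$K$ if and only if $\delta f=0$ in $\T^{n+1}(\Q)$; this is vacuous for $m=n$, while for $m>n$ it is genuine, because every $n$-dimensional combinatorial sphere~$L$ occurs as the link of a codimension-$(n+1)$ simplex in a closed combinatorial $m$-sphere (for instance the $m$-simplex on $m-n$ vertices inside $\Delta*L$, where $\Delta*L$ is capped off by $\cone\partial(\Delta*L)$, with $\dim\Delta=m-n-1$). Consequently it suffices to produce one $f_m$ for which $[f_{m\sharp}(K)]$ is the Poincar\'e dual of $F\bigl(p(K)\bigr)$ for every closed oriented $m$-dimensional~$K$: such a chain is automatically a cycle.

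The heart of the matter is to realise $F(p(K))$ by a \emph{local} cocycle. The plan is to build a simplicial (or semisimplicial) model $\B_m$ of $BPL_m$ whose cells carry purely combinatorial data --- essentially iterated links with orderings --- in such a way that every oriented combinatorial manifold~$K^m$, passed to its dual cell subdivision, comes with a canonical simplicial Gauss map $\gamma_K\colon|K|\to|\B_m|$ classifying its tangent microbundle, and with the crucial feature that $\gamma_K$ maps the dual cell $D(\sigma)$ of a codimension-$n$ simplex~$\sigma$ onto the $n$-cell of $\B_m$ indexed by $\Lk\sigma$. One then represents the universal class $F(p)\in H^n(\B_m;\Q)$ by a simplicial $n$-cocycle and averages its values over the orderings, so that the resulting value depends only on the oriented isomorphism class of the corresponding $(n-1)$-sphere; this defines $f_m\in\T^n(\Q)$. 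Pulling the cocycle back along $\gamma_K$ gives an $n$-cocycle on the dual cell complex of~$K$ representing $\gamma_K^*F(p)=F(p(K))$, and the compatibility of $\gamma_K$ with dual cells identifies its Poincar\'e dual with $\sum_{\codim\sigma=n}f_m(\langle\Lk\sigma\rangle)\,\sigma=f_{m\sharp}(K)$, which is therefore a cycle representing the Poincar\'e dual of $F(p(K))$. Equivalently, in the language of mock bundles one represents $F(p)$ by a codimension-$n$ mock bundle over $BPL_m$, pulls it back to the dual cell structure of~$K$, makes it nondegenerate, and sets $f_m(\langle\Lk\sigma\rangle)$ equal to the signed number of points of the zero-dimensional block lying over $D(\sigma)$ --- a number depending only on $\Lk\sigma$ because the pulled-back bundle over $D(\sigma)$ depends only on that link.

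The main obstacle is precisely the construction of the combinatorial classifying space $\B_m$ together with the canonical Gauss map compatible with the dual cell decomposition and pulling back the universal Pontryagin classes correctly. This is where the theory of piecewise linear microbundles and block bundles enters essentially, along with the local nature of the piecewise linear tangent bundle --- its germ along the interior of a simplex~$\sigma$ is determined by $\St\sigma\cong\sigma*\Lk\sigma$, hence by $\Lk\sigma$ --- so that the germs over all of~$K$ can be organised into a single map to a combinatorially defined space; one must also track orientations carefully so that the resulting cochain descends to a function on isomorphism classes of oriented spheres, i.e.\ to an element of $\T^n(\Q)$, and one allows $f_m$ to depend on~$m$ since the construction takes place in $BPL_m$. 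It is exactly the feasibility of this construction --- mirroring the fact that in the smooth case $g^*P_F$ is a form depending only on the local metric --- that constitutes the content of the Levitt--Rourke theorem.
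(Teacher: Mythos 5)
The paper itself contains no proof of this statement: it is quoted as Levitt and Rourke's theorem~\cite{LR78}, so there is no internal argument to compare yours with, and I can only judge the proposal on its own terms. Your preliminary reduction is sound: the computation $\partial f_{\sharp}(K)=\sum_{\tau\in K,\,\codim\tau=n+1}\pm(\delta f)\bigl(\langle\Lk\tau\rangle\bigr)\tau$ is correct (it is exactly the mechanism behind the equivalence of conditions (1) and (3) in Theorem~\ref{theorem_locform}), the realization of any $n$-dimensional combinatorial sphere as the link of a codimension-$(n+1)$ simplex works once one takes $\Delta$ of dimension $m-n-1$ (your phrase ``the $m$-simplex on $m-n$ vertices'' is off, but the construction is fine), and none of this is the hard part anyway.

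The rest, however, is not a proof but a restatement of the theorem. Everything is delegated to the existence of a combinatorially defined classifying space $\B_m$ with cells ``indexed by'' links, a canonical Gauss map $\gamma_K$ defined uniformly for all $K$ and compatible with the dual cell decomposition, and a choice of representing cocycle (or mock-bundle representative) for the universal class $F(p)$ whose value over the dual cell $D(\sigma)$ depends only on the oriented isomorphism class of $\Lk\sigma$. That locality is precisely what has to be proved: the germ of the tangent microbundle over $D(\sigma)$ is indeed determined by $\St\sigma\cong\sigma*\Lk\sigma$, but these local germs must be assembled coherently on the overlaps $D(\sigma)\cap D(\sigma')$ and uniformly in $K$, and the cocycle must be fixed once and for all on the universal object so that the resulting number over $D(\sigma)$ does not depend on how $\sigma$ sits inside $K$ --- the ``signed number of points of the zero-dimensional block over $D(\sigma)$'' of a pulled-back, made-transverse mock bundle a priori depends on all of these choices, not only on $\Lk\sigma$. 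Similarly, the ``averaging over orderings'' presupposes an unaveraged universal assignment that you never construct. You concede this yourself when you write that the feasibility of the construction ``constitutes the content of the Levitt--Rourke theorem''; that is an accurate self-assessment. As it stands the proposal is a plausible sketch of the Levitt--Rourke strategy (block/mock bundles, transversality, a combinatorially built classifying space), with the decisive step assumed rather than established, so the theorem is not proved.
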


Notice that, in the proof due to Levitt and Rourke, the functions $f_m$ for the same polynomial~$F$ but for different numbers~$m$ are not related to each other. In~\cite{Gai04} the author has proved that indeed all functions~$f_m$ can be chosen to be equal to each other. Moreover, if some function $f_m\in\T^n(\Q)$ satisfies the requirements of Theorem~\ref{theorem_LR} for some~$m\ge n$, then the same function satisfies the requirements of Theorem~\ref{theorem_LR} for all~$m\ge
n$. Thus we obtain the following theorem.

\begin{theorem}
\label{theorem_exist} Suppose $F\in \Q[p_1,p_2,\ldots]$ is a homogeneous polynomial of degree~$n=4k$, where $\deg p_i=4i$. Then there exists a function~$f\in \T^n(\Q)$ such that, for every oriented combinatorial manifold~$K$, $\dim K\ge n$, the chain $f_{\sharp}(K)$ is a cycle representing the Poincar\'e dual of the cohomology class
$F\bigl(p_1(K),p_2(K),\ldots\bigr)$.
\end{theorem}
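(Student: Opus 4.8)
\textbf{Proof proposal for Theorem~\ref{theorem_exist}.}

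The plan is to deduce the statement from Theorem~\ref{theorem_LR} by upgrading the family $\{f_m\}$ to a single function: I will show that if $f\in\T^n(\Q)$ satisfies the conclusion of Theorem~\ref{theorem_LR} for \emph{one} value $m_0\ge n+1$, then it satisfies it for \emph{every} $m\ge n$. Granting this, fix any $m_0\ge n+1$, take $f:=f_{m_0}$ supplied by Theorem~\ref{theorem_LR}, and we are done. Thus two things have to be checked for $f=f_{m_0}$ in every dimension $m\ge n$: that $f_{\sharp}(K)$ is a cycle, and that $[f_{\sharp}(K)]$ is the Poincar\'e dual of $F\bigl(p_1(K),p_2(K),\ldots\bigr)$.

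The cycle condition is essentially dimension-free. A direct computation from~\eqref{eq_uniform} and the definition of~$\delta$, using $\Lk_K(\rho\cup v)=\Lk_{\Lk_K\rho}(v)$, gives for every oriented combinatorial manifold $K$ of dimension $m$
$$\partial f_{\sharp}(K)=\pm\sum_{\rho\in K,\,\codim\rho=n+1}(\delta f)\bigl(\langle\Lk\rho\rangle\bigr)\,\rho,$$
so $f_{\sharp}(K)$ is a cycle for all $m$-dimensional $K$ if and only if $(\delta f)(\langle L\rangle)=0$ for every $n$-dimensional combinatorial sphere $L$ that occurs as the link of a codimension-$(n+1)$ simplex of some $m$-dimensional combinatorial manifold. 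But \emph{every} $n$-dimensional combinatorial sphere $L$ occurs so — for instance as the link of a top-dimensional simplex of the first factor of the combinatorial sphere $\partial\Delta^{m-n}*L$ — hence this condition does not depend on $m\ge n+1$, and there is nothing to check when $m=n$. Consequently the hypothesis that $f_{m_0\sharp}$ produces cycles in dimension $m_0$ already forces $(\delta f)(\langle L\rangle)=0$ on all $n$-spheres, and therefore $f_{\sharp}(K)$ is a cycle for \emph{every} oriented combinatorial manifold $K$ with $\dim K\ge n$.

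For the cohomology class I would first observe that, once $\delta f$ vanishes on all $n$-spheres, the class $[f_{\sharp}(K)]\in H_{\dim K-n}(|K|;\Q)$ depends only on the piecewise-linear homeomorphism type of $|K|$: given $K_1,K_2$ with $|K_1|$ PL-homeomorphic to $|K_2|$, one takes a combinatorial manifold $W$ with $|W|=|K_1|\times[0,1]$ and $\partial W=K_1\sqcup(-K_2)$, applies~\eqref{eq_uniform} to $W$ (summing only over codimension-$n$ simplices whose link is a sphere), and computes $\partial f_{\sharp}(W)$: the interior contributions vanish by the vanishing of $\delta f$ on $n$-spheres, and the boundary contributions are pinned down, again by that vanishing applied to spheres obtained by coning off, so that $f_{\sharp}(K_1)$ and $f_{\sharp}(K_2)$ become homologous in $|K_1|$ (equivalently: $[f_{\sharp}(K)]$ is invariant under bistellar moves). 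Since the right-hand side of~\eqref{eq_uniform} is manifestly local, the assignment $|K|\mapsto c(|K|):=\mathrm{PD}^{-1}[f_{\sharp}(K)]\in H^n(|K|;\Q)$ is then a local, PL-invariant characteristic cohomology class of combinatorial manifolds, natural under PL open embeddings. By the standard classification of such classes (see~\cite{LR78}) it is pulled back from $H^n(B\mathrm{PL};\Q)$ along the classifying map of the stable tangent microbundle, and since $H^*(B\mathrm{PL};\Q)\cong H^*(B\mathrm{O};\Q)\cong\Q[p_1,p_2,\ldots]$ this means $c(|K|)=G\bigl(p_1(K),p_2(K),\ldots\bigr)$ for one universal homogeneous degree-$n$ polynomial $G$ in the rational Pontryagin classes, the \emph{same} for all $K$ of all dimensions $\ge n$. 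Finally, Theorem~\ref{theorem_LR} in dimension $m_0$ gives $G\bigl(p(K)\bigr)=F\bigl(p(K)\bigr)$ for every $m_0$-dimensional $K$; evaluating on manifolds of the form $M\times S^{m_0-n}$, where $M$ is a $4k$-dimensional manifold (e.g. a product of copies of $\mathbb{CP}^{2}$ and other $\mathbb{CP}^{2i}$) whose Pontryagin numbers detect a hypothetical nonzero $G-F$, forces $G=F$. Hence $[f_{\sharp}(K)]=\mathrm{PD}\bigl(F(p(K))\bigr)$ for every oriented combinatorial manifold $K$ with $\dim K\ge n$.

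The main obstacle is precisely the passage between dimensions carried out in the previous paragraph. The cycle condition is genuinely local and cheap, but the homology class sees the ambient dimension, and — in contrast with spheres, where one could suspend — there is no dimension-raising operation on triangulated manifolds staying within combinatorial manifolds and interacting simply with $f_{\sharp}$; one cannot compare $f_{\sharp}(K)$ for $K$ of different dimensions directly. What breaks the deadlock is the two-step device: (i) the cobordism/bistellar argument, which turns the triangulation-dependent chain $f_{\sharp}(K)$ into a PL invariant, and (ii) the identification of local PL-invariant characteristic classes with $H^*(B\mathrm{PL};\Q)=\Q[p_1,p_2,\ldots]$, which makes the resulting invariant a single polynomial valid in all dimensions. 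The delicate points are the bookkeeping of the boundary contributions to $\partial f_{\sharp}(W)$ in step (i), and, in step (ii), the top-degree case $\dim K=n$, where $c(|K|)$ lives in the top cohomology and is most conveniently identified using the bordism invariance of the associated Pontryagin number together with Theorem~\ref{theorem_LR} in dimension $n$.
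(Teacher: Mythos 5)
Your reduction — take $f=f_{m_0}$ from Theorem~\ref{theorem_LR} for one $m_0\ge n+1$ and show it works in all dimensions — is exactly the improvement the paper attributes to~\cite{Gai04}, and your treatment of the cycle condition is sound: the identity $\partial f_{\sharp}(K)=\pm\sum_{\codim\rho=n+1}(\delta f)\bigl(\langle\Lk\rho\rangle\bigr)\rho$ together with the observation that every $n$-dimensional combinatorial sphere $L$ occurs as the link of a codimension-$(n+1)$ simplex of the $m_0$-dimensional sphere $\partial\Delta^{m_0-n}*L$ does show that the hypothesis in one dimension forces $\delta f=0$, hence cycles in all dimensions (this is the implication $(1)\Rightarrow(3)$ of Theorem~\ref{theorem_locform}). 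The final normalization $G=F$ via products $M^n\times S^{m_0-n}$ and the nondegenerate Pontryagin-number pairing is also fine, \emph{granted} the preceding step.

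The genuine gap is precisely that preceding step: the claim that the assignment $K\mapsto\mathrm{PD}^{-1}[f_{\sharp}(K)]$ is, for a cocycle $f$, a single polynomial in the rational Pontryagin classes, the same in all dimensions. You justify it by appealing to ``the standard classification of local PL-invariant characteristic classes'' and cite~\cite{LR78}, but Levitt--Rourke prove the opposite direction — existence of local formulae for classes pulled back from $B\mathrm{PL}$ — not that every locally defined, PL-invariant class is so pulled back; the latter statement is essentially the hard implication $(1)\Rightarrow(2)$ of Theorem~\ref{theorem_locform}, which in this paper rests on~\cite{Gai04} and on the computation of $H_*(\T_*)$ (Theorem~\ref{theorem_isom}, from~\cite{Gai08}). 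So at the crucial point you are assuming a result at least as deep as the one being proved; moreover your naturality framework is not well posed as stated, since $c(|K|)$ is defined only for closed manifolds and ``naturality under PL open embeddings'' requires working with the cycle-level local data, which is exactly the missing construction. The cobordism sketch has the same character: for a codimension-$(n+1)$ simplex $\rho\subset\partial W$ the relevant coefficient of $\partial f_{\sharp}(W)$ is the sum of $f$ over links of \emph{interior} vertices of the ball $D=\Lk_W\rho$, and applying $\delta f=0$ to the sphere obtained by coning off $\partial D$ converts it into $\mp f\bigl(\langle\Lk_{\partial W}\rho\rangle\bigr)$ \emph{plus} correction terms given by $f$ of the modified (coned) links at the boundary vertices of $D$; these corrections do not visibly cancel, so ``the boundary contributions are pinned down'' conceals the real bookkeeping. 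As it stands, the argument establishes the dimension-independence of the cycle condition but not of the represented class, which is the heart of Theorem~\ref{theorem_exist}.
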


This improvement of the Levitt--Rourke theorem is important for us because it allows to give the following definition.

\begin{defin}
A function $f\in\T^n(\Q)$ is called a (\textit{universal})
\textit{local formula} for a homogeneous polynomial  $F\in
\Q[p_1,p_2,\ldots]$ of degree $n$ if, for every oriented combinatorial manifold~$K$ such that $\dim K\ge n$, the chain 
$f_{\sharp}(K)$ is a cycle representing the Poincar\'e dual of the cohomology class $F\bigl(p_1(K),p_2(K),\ldots\bigr)$.
\end{defin}

Theorem~\ref{theorem_exist} provides the existence of local formulae for all homogeneous polynomials in rational Pontryagin classes. Now let us consider in some sense an inverse question. Let $f\in\T^n(\Q)$ be a function such that the chain
$f_{\sharp}(K)$ is a cycle for every oriented combinatorial manifold~$K$,  $\dim K\ge n$. What can we say about the homology classes represented by the cycles
$f_{\sharp}(K)$? The answer to this question has been obtained by the author in~\cite{Gai04}.

\begin{theorem}\label{theorem_locform}
For a function $f\in \T^n(\Q)$ the following three condidtions are equivalent
\begin{enumerate}
\item the chain $f_{\sharp}(K)$ is a cycle for every oriented combinatorial manifold~$K$ such that $\dim
K\ge n$;

\item $f$ is a local formula for some homogeneous polynomial in rational Pontryagin classes;

\item $f$ is a cocycle of the complex~$\T^*(\Q)$, that is, $\delta f=0$.
\end{enumerate}
Besides, the cocylces of~$\T^*(\Q)$ representing the same cohomology class are local formulae for the same polynomial in rational Pontryagin classes.
\end{theorem}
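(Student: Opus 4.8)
The plan is to prove the implications $(2)\Rightarrow(1)\Rightarrow(3)\Rightarrow(2)$ and then the final statement about cohomology classes. The implication $(2)\Rightarrow(1)$ is immediate from the definition of a local formula. For $(1)\Rightarrow(3)$, the idea is that the condition ``$f_\sharp(K)$ is a cycle for every~$K$'' should be testable on sufficiently simple local models, and these models detect exactly the coboundary $\delta f$. Concretely, given a $(n-1)$-dimensional oriented combinatorial sphere~$L$ and a vertex~$w$ not lying in~$L$, consider the combinatorial manifold $K=\partial(\Delta)*L$ or, more usefully, a combinatorial manifold of dimension $n$ containing a simplex~$\rho$ of codimension~$n-1$ (i.e.\ $\dim\rho = 1$, an edge) with $\Lk\rho\cong L$ whose two endpoints $v_0,v_1$ have links that one can compute. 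Then the boundary $\partial f_\sharp(K)$, restricted to the coefficient of~$\rho$, is a signed sum over the vertices $v$ of $K$ adjacent to the edges containing~$\rho$; choosing~$K$ so that this reduces to $\sum_{v\in V(L)} f(\langle\Lk_L v\rangle)$ up to sign, we get $(\delta f)(\langle L\rangle)=0$. The point is to produce, for each given oriented sphere~$L$, a closed combinatorial $n$-manifold in which the link of some codimension-$(n-1)$ simplex is~$L$ and in which the surrounding combinatorics are controlled; a clean choice is to take $\St\rho \cong \rho * L$ inside the boundary of a simplex, or to use the suspension construction $\Sigma L$ iterated, using that links of combinatorial spheres are combinatorial spheres.

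For $(3)\Rightarrow(2)$, I would invoke Theorem~\ref{theorem_exist}: the cocycles of $\T^*(\Q)$ in degree~$n$ that arise as local formulae for homogeneous polynomials in rational Pontryagin classes form (by linearity of $F\mapsto f_F$ and of $f\mapsto f_\sharp$) a linear subspace $P^n$ of $Z^n(\T^*(\Q))$, and Theorem~\ref{theorem_exist} says $P^n$ surjects onto the space of homogeneous degree-$n$ polynomials in Pontryagin classes. It therefore suffices to show that $P^n$ is all of $Z^n(\T^*(\Q))$, equivalently that the quotient $Z^n/P^n$ vanishes. A cocycle $f$ with $f_\sharp(K)$ null-homologous for every~$K$ of every dimension $\ge n$ would have to be a coboundary --- this is where the real content sits --- because a universal local formula representing the zero cohomology class on all manifolds can be shown, via a cone/suspension argument inside $\T_*$, to be $\delta g$ for an explicit $g\in\T^{n-1}(\Q)$. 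Combined with the known structure of $H^*(\T^*(\Q))$ (which, by the Levitt--Rourke/Gaifullin results, is precisely the polynomial algebra on the Pontryagin classes in the relevant degrees), every cocycle is the sum of a coboundary and a genuine Pontryagin local formula, giving $(3)\Rightarrow(2)$. The closing sentence then follows: if $f_1,f_2$ are cocycles with $f_1-f_2=\delta g$, then for every~$K$ the chains $f_{1\sharp}(K)$ and $f_{2\sharp}(K)$ differ by $\partial$ of an explicit chain built from~$g$ (by the same local-to-global bookkeeping as in $(1)\Rightarrow(3)$), hence represent the same homology class, so $f_1,f_2$ are local formulae for the same polynomial.

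The main obstacle I expect is the step that cohomologous cocycles of $\T^*(\Q)$ yield homologous cycles $f_\sharp(K)$ --- i.e.\ showing that $\delta g$ produces a null-homologous chain on every combinatorial manifold~$K$. The natural attempt is a chain-level formula: one wants an operator $K\mapsto g_\flat(K)\in C_{m-n+1}(K;\Q)$, built by summing $g(\langle\Lk\sigma\rangle)\,\sigma$ over codimension-$(n-1)$ simplices with suitable signs, such that $\partial g_\flat(K) = (\delta g)_\sharp(K)$. Verifying this identity is a sign-chasing computation in simplicial chains: $\partial g_\flat(K)$ expands as a sum over pairs $\sigma\subset\sigma'$ with $\codim\sigma=n-1$, $\codim\sigma'=n$, and one must match the face-map signs of~$K$ with the differential~$\delta$ of $\T^*(\Q)$ defined via links of vertices of the relevant spheres. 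This requires care because the orientation conventions in \eqref{eq_uniform}, the induced orientations on links, and the simplicial boundary must all be reconciled --- but it is the kind of verification that, once the conventions are fixed, goes through, and it is precisely the computation carried out in~\cite{Gai04}; the novelty here is organizational rather than in this lemma.
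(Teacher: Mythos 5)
The sound core of your proposal is the boundary identity $\partial\bigl(g_{\sharp}(K)\bigr)=\pm(\delta g)_{\sharp}(K)$: the coefficient of a codimension-$(n+1)$ simplex $\tau$ in $\partial f_{\sharp}(K)$ is $\pm(\delta f)\bigl(\langle\Lk\tau\rangle\bigr)$, and this does give $(3)\Rightarrow(1)$, gives $(1)\Rightarrow(3)$ once every oriented $n$-dimensional sphere $L$ is realized as such a link (e.g.\ as the link of a suspension vertex of $S^0*L$), and is exactly what makes coboundaries harmless in the last assertion. But your bookkeeping in $(1)\Rightarrow(3)$ is off: $f\in\T^n(\Q)$ is evaluated on $(n-1)$-spheres, $\delta f$ on $n$-spheres, and it is detected at codimension-$(n+1)$ faces; your test configuration (an edge of codimension $n-1$ in an $n$-manifold with link $L$) has the wrong dimensions and does not see $\delta f$ at all. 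This slip is repairable; the suspension remark you make in passing is the correct fix. Also, to finish the last sentence of the theorem you additionally need that the polynomial is determined by its family of dual classes (evaluate on smooth manifolds via Pontryagin numbers); that is minor.

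The genuine gap is $(3)\Rightarrow(2)$, which is the heart of the theorem, and your two justifications for it do not work. Appealing to ``the known structure of $H^*\bigl(\T^*(\Q)\bigr)$'' means appealing to the isomorphism $\varphi$ of Theorem~\ref{theorem_isom}; but $\varphi$ is only defined (in the corollary following Theorem~\ref{theorem_locform}) once Theorem~\ref{theorem_locform} is known, and its bijectivity is the later and harder result of~\cite{Gai08}, so this is circular in the paper's development (and historically \cite{Gai04} proves Theorem~\ref{theorem_locform} without it). Your fallback claim --- that a cocycle $f$ with $f_{\sharp}(K)$ null-homologous for every $K$ must be a coboundary, provable ``via a cone/suspension argument inside $\T_*$'' --- is not an elementary fact: since $H^n\bigl(\T^*(\Q)\bigr)\cong\Hom\bigl(H_n(\T_*),\Q\bigr)$, it amounts to saying that the link-cycles $\sum_{v}\langle\Lk v\rangle$ of combinatorial manifolds span $H_n(\T_*)\otimes\Q$, i.e.\ that the cokernel of $\alpha$ is torsion, which is precisely the deep realization theorem of~\cite{Gai08}; no cone or suspension trick yields it. What the actual proof in~\cite{Gai04} does instead is: from $\delta f=0$ deduce that $K\mapsto\sum_{v\in V(K)}f\bigl(\langle\Lk v\rangle\bigr)$ is a bordism invariant of closed oriented combinatorial $n$-manifolds (cobordant manifolds have homologous link-cycles --- the easy half of the construction of $\alpha$), use $\Omega^{PL}_*\otimes\Q\cong\Omega^{SO}_*\otimes\Q$ and the Pontryagin-number pairing to extract the polynomial $F$, and then prove by a naturality/transversality argument of Levitt--Rourke type (this is the content of the statement that one $f$ works for all $m\ge n$) that $[f_{\sharp}(K)]$ is dual to $F\bigl(p(K)\bigr)$ for manifolds of all dimensions $m\ge n$, smoothable or not. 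None of this appears in your outline, so the proposal as written does not establish $(3)\Rightarrow(2)$.
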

\begin{cor} The mapping taking each cocycle~$f$ of~$\T^*(\Q)$ to the polynomial~$F$ such that $f$ is a local formula for~$F$ induces an additive homomorphism
$$
\varphi:H^*\bigl(\T^*(\Q)\bigr)\to \Q[p_1,p_2,\ldots].
$$
\end{cor}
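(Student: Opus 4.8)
The plan is to deduce this from Theorem~\ref{theorem_locform}, since the corollary becomes purely formal once that theorem is available. First I would observe that the correspondence $f\mapsto F$ is defined on all cocycles of~$\T^*(\Q)$: by the implication $(3)\Rightarrow(2)$ of Theorem~\ref{theorem_locform}, every cocycle $f\in\T^n(\Q)$ is a local formula for some homogeneous polynomial $F$ of degree~$n$ in the rational Pontryagin classes, and for $n$ not divisible by~$4$ the only such polynomial is $F=0$. The polynomial $F$ attached to $f$ is unique, because the rational Pontryagin classes of closed smooth manifolds (which are combinatorial manifolds) satisfy no nontrivial universal polynomial relation; this is a standard consequence of the computation of the rational oriented cobordism ring, where products of even-dimensional complex projective spaces already realise independent Pontryagin numbers.

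Next I would check that the assignment depends only on the cohomology class of~$f$. This is precisely the content of the last sentence of Theorem~\ref{theorem_locform}: cocycles of~$\T^*(\Q)$ representing the same cohomology class are local formulae for the same polynomial. Hence the rule $[f]\mapsto F$ gives a well-defined map $\varphi\colon H^*\bigl(\T^*(\Q)\bigr)\to\Q[p_1,p_2,\ldots]$, carrying $H^n\bigl(\T^*(\Q)\bigr)$ into the homogeneous degree-$n$ component of $\Q[p_1,p_2,\ldots]$, and into~$0$ when $4\nmid n$.

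It remains to prove additivity, and since $H^*\bigl(\T^*(\Q)\bigr)=\bigoplus_{n\ge 0}H^n\bigl(\T^*(\Q)\bigr)$, it suffices to treat two cocycles $f,g\in\T^n(\Q)$ of the same degree; additivity for classes in different gradings then follows formally. Let $F$ and $G$ be the homogeneous degree-$n$ polynomials for which $f$ and $g$ are local formulae. From the defining formula~\eqref{eq_uniform} the chain $f_{\sharp}(K)$ depends linearly on~$f$, so $(f+g)_{\sharp}(K)=f_{\sharp}(K)+g_{\sharp}(K)$ for every oriented combinatorial manifold~$K$ with $\dim K\ge n$; moreover $f+g$ is again a cocycle. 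By Theorem~\ref{theorem_locform} the three chains $f_{\sharp}(K)$, $g_{\sharp}(K)$, $(f+g)_{\sharp}(K)$ are all cycles, and the homology class of the last is the sum of the first two, namely the Poincar\'e dual of $F\bigl(p(K)\bigr)+G\bigl(p(K)\bigr)=(F+G)\bigl(p(K)\bigr)$. Since $F+G$ is homogeneous of degree~$n$, this means $f+g$ is a local formula for $F+G$, i.e. $\varphi\bigl([f]+[g]\bigr)=\varphi\bigl([f]\bigr)+\varphi\bigl([g]\bigr)$.

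There is no real obstacle in this argument; the one point deserving the remark made above is the uniqueness of the polynomial~$F$ attached to a given cocycle, without which $\varphi$ would not even be well defined on individual cocycles.
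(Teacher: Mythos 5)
Your proposal is correct and follows the route the paper intends: the corollary is stated as an immediate consequence of Theorem~\ref{theorem_locform}, with well-definedness on cohomology coming from its last sentence, uniqueness of the polynomial~$F$ from the non-degeneracy of the Pontryagin-number pairing (which the paper itself invokes shortly afterwards), and additivity from the evident linearity of $f\mapsto f_{\sharp}(K)$. Your explicit treatment of the uniqueness point and of degrees not divisible by~$4$ just fills in details the paper leaves implicit.
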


Let us now describe the homology of the differential graded algebra
$\T_*\otimes\Q$ and the cohomology of the coalgebra~$\T^*(\Q)$. Denote by~$\Omega^{SO}_*$ the oriented cobordism ring. It is well known that the ring $\Omega^{SO}_*\otimes\Q$ is a polynomial ring with one generator in every dimension divisible by~$4$.

We construct a homomorphism $\alpha:\Omega^{SO}_*\to H_*(\T_*)$ in the following way. Let $M^n$ be an oriented smooth manifold. Choose an arbitrary smooth triangulation~$K$ of~$M^n$ and put
$$
\alpha\bigl([M^n]\bigr)=\left[\sum_{v\in V(K)}\langle\Lk
v\rangle\right],
$$
where the square brackets in the left hand side denote the cobordism class and the square brackets in the right hand side denote the homology class. It can be immediately checked that the sum inside the square brackets in the right hand side is a cycle of the complex~$\T_*$ and the homology class of this cycle is independent of the triangulation~$K$. Besides, this homology class does not change if we replace the manifold~$M^n$ with a manifold cobordant to it (for details, see~\cite{Gai08}). Thus $\alpha$ is a well defined homomorphism. It is easy to check that the homomorphism~$\alpha\otimes\Q$ is conjugate to the homomorphism~$\varphi$ with respect to the canonical non-degenerated pairing
$$
\Q[p_1,p_2,\ldots]\otimes(\Omega^{SO}_*\otimes\Q)\to\Q,
$$
given by the Pontryagin numbers. In~\cite{Gai08}, the author proved the following theorem.

\begin{theorem}\label{theorem_isom}
The kernel and the cokernel of~$\alpha$ are torsion groups.
The homomorphisms 
\begin{gather*}
\alpha\otimes\Q:\Omega^{SO}_*\otimes\Q\to H_*(\T_*)\otimes\Q;\\
\varphi:H^*\bigl(\T^*(\Q)\bigr)\to \Q[p_1,p_2,\ldots]
\end{gather*}
are isomorphisms.
\end{theorem}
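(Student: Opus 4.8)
The plan is to deduce the entire statement from two facts about the homomorphism $\alpha\otimes\Q$, namely that it is injective and that it is surjective in each degree; everything else is formal. First, since $-\otimes\Q$ is exact, the kernel and cokernel of $\alpha$ are torsion groups precisely when $\alpha\otimes\Q$ is an isomorphism. Second, since $\Hom(-,\Q)$ is exact on abelian groups, the dual complex $\T^*(\Q)=\Hom(\T_*,\Q)$ satisfies $H^n(\T^*(\Q))=\Hom\bigl(H_n(\T_*),\Q\bigr)=\bigl(H_n(\T_*)\otimes\Q\bigr)^{*}$, while the non-degenerate Pontryagin pairing identifies the homogeneous degree-$n$ part of $\Q[p_1,p_2,\ldots]$ with $\bigl(\Omega^{SO}_n\otimes\Q\bigr)^{*}$. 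Under these identifications the relation $\langle\varphi(f),[M^n]\rangle=f\bigl(\sum_{v\in V(K)}\langle\Lk v\rangle\bigr)$ --- which holds whenever $f\in\T^n(\Q)$ is a local formula for a polynomial $F$, $M^n$ is a closed oriented smooth manifold with $n=\dim M=\deg F$, and $K$ is a smooth triangulation of $M^n$, and which is the conjugacy of $\alpha\otimes\Q$ and $\varphi$ noted in the text --- says exactly that $\varphi$ is the transpose of $\alpha\otimes\Q$. Transposition over a field interchanges injectivity and surjectivity, so once $\alpha\otimes\Q$ is shown to be injective and surjective it is an isomorphism and hence so is $\varphi$ (the map of the corollary to Theorem~\ref{theorem_locform}).

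\emph{Injectivity of $\alpha\otimes\Q$.} Suppose $(\alpha\otimes\Q)\bigl([M^n]\bigr)=0$ for a closed oriented smooth $M^n$, and fix a smooth triangulation $K$. Then the cycle $z_0=\sum_{v\in V(K)}\langle\Lk v\rangle$ of $\T_n$ is a rational boundary, $z_0=\partial z$ with $z\in\T_{n+1}\otimes\Q$. Let $F$ be any homogeneous polynomial of degree $n$ in the $p_i$. By Theorem~\ref{theorem_exist} there is a local formula $f\in\T^n(\Q)$ for $F$, and by Theorem~\ref{theorem_locform} $f$ is a cocycle, $\delta f=0$. Since $\dim M=n=\deg F$, the $0$-chain $f_\sharp(K)$ represents the Poincar\'e dual of $F\bigl(p_1(M),p_2(M),\ldots\bigr)$, so its augmentation equals the corresponding Pontryagin number of $M^n$; but that augmentation also equals $f(z_0)=f(\partial z)=\pm(\delta f)(z)=0$. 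As $F$ was arbitrary, all Pontryagin numbers of $M^n$ vanish, whence $[M^n]\otimes\Q=0$ by the non-degeneracy of the Pontryagin pairing. (Read backwards, this computation also shows at once that $\varphi$ is surjective: every homogeneous $F$ is $\varphi([f])$ for the cocycle $f$ furnished by Theorem~\ref{theorem_exist}.)

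\emph{Surjectivity of $\alpha\otimes\Q$.} This is the substantial part, and I expect it to be the main obstacle. By the transpose description it is equivalent to the injectivity of $\varphi$, that is, to the assertion that a cocycle of $\T^*(\Q)$ which is a local formula for the zero polynomial is necessarily a coboundary; and, since such a cocycle automatically annihilates boundaries and, by the computation above, annihilates $\alpha([N^n])$ for every closed combinatorial manifold $N^n$, it is equivalent to saying that $H_n(\T_*\otimes\Q)$ is spanned by the classes $\alpha([N^n])$. The approach I would take is to realise an arbitrary rational cycle $z=\sum_i c_i\langle L_i\rangle$ of $\T_n$, modulo boundaries, as $\alpha$ of a closed manifold. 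After clearing denominators and using $\langle -L\rangle=-\langle L\rangle$ to make all $c_i$ positive integers, one assembles the cones $\cone L_i$, each taken with multiplicity $c_i$, into a single closed combinatorial manifold $M$ by gluing them to one another along portions of their boundaries; the cycle relation $\partial z=0$ is precisely the compatibility condition that makes such a gluing possible. In $M$ the cone vertices contribute exactly $z$ to the cycle $\sum_w\langle\Lk w\rangle=\alpha([M])$, so $z$ differs from $\alpha([M])$ only by the contribution of the remaining vertices, and one must show --- iterating the construction in lower ``complexity'' and correcting by boundaries at each step --- that this remaining cycle is itself a boundary. Finally, passing to subdivisions and invoking a transversality and surgery argument in the spirit of Levitt--Rourke~\cite{LR78}, one replaces $M$ by a genuine closed smooth manifold within its bordism class, so that its class lies in $\Omega^{SO}_n$. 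The delicate combinatorial control of the gluing and of this descending iteration is the crux: carrying it out, one is in effect proving that $H_*(\T_*\otimes\Q)$ is a combinatorial model for rational oriented bordism. The complete argument is given in~\cite{Gai08}; granting it, the reductions of the first paragraph finish the proof.
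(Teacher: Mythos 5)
The paper does not actually prove Theorem~\ref{theorem_isom}: it states it and refers to~\cite{Gai08} for the proof, so there is no internal argument to compare yours against. Your formal reductions are correct and consistent with the text: exactness of $-\otimes\Q$ handles the torsion statement; since $\Q$ is an injective abelian group, $H^n\bigl(\T^*(\Q)\bigr)\cong\Hom\bigl(H_n(\T_*),\Q\bigr)\cong\bigl(H_n(\T_*)\otimes\Q\bigr)^*$, and your relation $\langle\varphi(f),[M^n]\rangle=f\bigl(\sum_{v\in V(K)}\langle\Lk v\rangle\bigr)$ is precisely the conjugacy of $\alpha\otimes\Q$ and $\varphi$ asserted in section~\ref{section_locform}, so the two isomorphism claims are indeed transposes of one another. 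Your injectivity argument for $\alpha\otimes\Q$ (equivalently, surjectivity of $\varphi$) is sound and uses only Theorems~\ref{theorem_exist} and~\ref{theorem_locform}, which are logically prior (they come from~\cite{Gai04} and the Levitt--Rourke theorem), so there is no circularity. However, the real content of the theorem is the surjectivity of $\alpha\otimes\Q$, i.e.\ the injectivity of $\varphi$, and there you give only a heuristic sketch (gluing the cones over the spheres~$L_i$ along boundary pieces, iterating in decreasing complexity, then smoothing within the rational bordism class) before citing~\cite{Gai08} --- which is exactly what the paper itself does. So your proposal adds correct and useful detail on the easy half, but it is not an independent proof: the realisation of cycles of~$\T_*$ by closed manifolds (in fact only up to multiplication by an integer and addition of a boundary), together with the bordism invariance of~$\alpha$, is the substance of~\cite{Gai08}, and your gluing picture should be read as a pointer to that paper rather than as an argument; as stated, the claim that the condition $\partial z=0$ alone makes such a gluing into a closed combinatorial manifold possible is far from immediate.
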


The following uniqueness theorem obtained in~\cite{Gai04} is a straightforward consequence of Theorem~\ref{theorem_isom}. 

\begin{theorem}\label{theorem_unique}
A local formula for a homogeneous polynomial  $F\in
\Q[p_1,p_2,\ldots]$ is unique up to a coboundary of the complex~$\T^*(\Q)$.
\end{theorem}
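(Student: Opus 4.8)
The plan is to derive Theorem~\ref{theorem_unique} as an immediate formal consequence of Theorems~\ref{theorem_locform} and~\ref{theorem_isom}, with no new geometric input required. Suppose $f_1,f_2\in\T^n(\Q)$ are both local formulae for one and the same homogeneous polynomial $F\in\Q[p_1,p_2,\ldots]$. First I would apply the equivalence (2)$\Leftrightarrow$(3) of Theorem~\ref{theorem_locform}: since each $f_i$ is a local formula, each $f_i$ is a cocycle of the complex $\T^*(\Q)$, hence determines a cohomology class $[f_i]\in H^n\bigl(\T^*(\Q)\bigr)$.

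Next I would use the Corollary to Theorem~\ref{theorem_locform}, which asserts that the assignment ``$f\mapsto F$'' descends to a well-defined additive homomorphism $\varphi:H^*\bigl(\T^*(\Q)\bigr)\to\Q[p_1,p_2,\ldots]$; the well-definedness here is exactly the last sentence of Theorem~\ref{theorem_locform}, namely that cohomologous cocycles are local formulae for the same polynomial. By construction $\varphi([f_1])=F=\varphi([f_2])$. Now Theorem~\ref{theorem_isom} tells us that $\varphi$ is an isomorphism, in particular injective, so $[f_1]=[f_2]$; equivalently $f_1-f_2=\delta g$ for some $g\in\T^{n-1}(\Q)$. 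This is precisely uniqueness up to a coboundary.

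For completeness I would also record the converse inclusion, so as to describe the full set of local formulae for $F$: if $f$ is a local formula for $F$ and $g\in\T^{n-1}(\Q)$ is arbitrary, then $f+\delta g$ is a cocycle cohomologous to $f$, hence by the final sentence of Theorem~\ref{theorem_locform} it is again a local formula for $F$. Therefore the set of local formulae for a given $F$ is exactly the coset $f+\delta\bigl(\T^{n-1}(\Q)\bigr)$, which makes the meaning of ``unique up to a coboundary'' completely transparent.

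There is essentially no obstacle inside this argument itself; all of the substance has been absorbed into Theorem~\ref{theorem_isom}, proved in~\cite{Gai08}, which identifies $H^*\bigl(\T^*(\Q)\bigr)$ with $\Q[p_1,p_2,\ldots]$. The one subtlety worth flagging is that the present deduction uses the \emph{injectivity} of $\varphi$ rather than its surjectivity: surjectivity of $\varphi$ would only say that every homogeneous polynomial admits a local formula (which is Theorem~\ref{theorem_exist}), whereas injectivity --- equivalently, via the non-degenerate Pontryagin-number pairing, the statement that the cokernel of $\alpha$ is a torsion group --- is exactly what forces two local formulae for the same polynomial to be cohomologous.
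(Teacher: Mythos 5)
Your proof is correct and matches the paper's intent: the paper derives Theorem~\ref{theorem_unique} as a ``straightforward consequence of Theorem~\ref{theorem_isom}'', and your argument (both formulae are cocycles by Theorem~\ref{theorem_locform}, $\varphi([f_1])=F=\varphi([f_2])$, and injectivity of $\varphi$ forces $[f_1]=[f_2]$) is exactly that deduction spelled out. Your remark that injectivity of $\varphi$, rather than surjectivity, is the operative ingredient is accurate and consistent with the paper.
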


Now let us apply the obtained theorems to local formulae for the first rational Pontryagin class. Theorems~\ref{theorem_exist},~\ref{theorem_locform}, and~\ref{theorem_unique} imply that the function  $f\in \T^4(\Q)$ satisfying the equation $\delta f=0$ is unique up to adding a coboundary and multiplying by a rational constant. Any such function is a local formula for the first Pontryagin class multiplied by some rational constant. Our further intention is to use the technique of bistellar moves to describe explicitly all solutions~$f\in \T^4(\Q)$ of the equation~$\delta f=0$.

\section{Bistellar moves and graphs~$\Gamma_n$}
\label{section_bistellar} Let $K$ be an $n$-dimensional combinatorial manifold on the vertex set~$V$. Assume that the complex~$K$ contains a full subcomplex $\sigma^k*\partial\tau^{n-k}$, where $\sigma^k$ is a simplex of~$K$ and $\tau^{n-k}$ is an ``empty simplex'' of~$K$, that is, an $(n-k+1)$-element subset of~$V$ such that $\tau^{n-k}$ does not belong to~$K$ and all proper subsets of~$\tau^{n-k}$ belong to~$K$. Replace the full subcomplex~$\sigma^k*\partial\tau^{n-k}\subset K$ by the full subcomplex~$\partial\sigma^k*\tau^{n-k}$ and denote the obtained simplicial complex by~$K_1$. It can be easily checked that $K_1$ is a combinatorial manifold piecewise linearly homeomorphic to~$K$. The described operation is called a  {\it bistellar move} and is denoted by~$\beta=\beta_{K,\sigma^k}$. The obtained combinatorial manifold~$K_1$ is denoted by~$\beta(K)$. In the described construction we can suppose that $k=0$ or $k=n$ using the agreements $\partial\,\pt=\varnothing$ and
$\sigma*\varnothing=\sigma$. Thus particular cases of bistellar moves are 
stellar subdivisions of $n$-dimensional simplices and inverse operations.
The bistellar move $\beta_{K_1,\tau^{n-k}}$ is said to be the bistellar move  \textit{inverse} to~$\beta$ and is denoted by~$\beta^{-1}$. All types of bistellar moves for manifolds of dimensions~$2$ and~$3$ are shown in Fig.~\ref{fig_bm2} and~\ref{fig_bm3} respectively. The move shown in Fig.~\ref{fig_bm3} on the right transforms two tetrahedra with common two-dimensional face to three tetrahedra with common edge.

\begin{figure}
{\unitlength=0.3mm
\begin{picture}(370,60)

\thicklines

\put(0,5){\circle*{1}}

\put(100,5){\circle*{1}}

\put(130,25){\circle*{1}}

\put(30,55){\circle*{1}}

\put(60,5){\circle*{1}}

\put(130,55){\circle*{1}}

\put(160,5){\circle*{1}}

\put(210,30){\circle*{1}}

\put(240,0){\circle*{1}}

\put(240,60){\circle*{1}}

\put(270,30){\circle*{1}}

\put(310,30){\circle*{1}}

\put(340,0){\circle*{1}}

\put(340,60){\circle*{1}}

\put(370,30){\circle*{1}}

\put(0,5){\line(3,5){30}}

\put(30,55){\line(3,-5){30}}

\put(0,5){\line(1,0){60}}

\put(100,5){\line(3,5){30}}

\put(130,55){\line(3,-5){30}}

\put(100,5){\line(1,0){60}}

\put(100,5){\line(3,2){30}}

\put(130,25){\line(0,1){30}}

\put(130,25){\line(3,-2){30}}

\put(210,30){\line(1,1){30}}

\put(240,60){\line(1,-1){30}}

\put(210,30){\line(1,-1){30}}

\put(240,0){\line(1,1){30}}

\put(240,0){\line(0,1){60}}

\put(310,30){\line(1,1){30}}

\put(340,60){\line(1,-1){30}}

\put(310,30){\line(1,-1){30}}

\put(340,0){\line(1,1){30}}

\put(310,30){\line(1,0){60}}

\put(276,30){\vector(1,0){28}}

\put(304,30){\vector(-1,0){28}}

\put(66,30){\vector(1,0){28}}

\put(94,30){\vector(-1,0){28}}

\end{picture}
}

\caption{Bistellar moves for manifolds of dimension~$2$}\label{fig_bm2}
\bigskip
\bigskip
{\unitlength=0.3mm
\begin{picture}(370,100)

\thicklines


\put(0,40){\circle*{1}}

\put(20,20){\circle*{1}}

\put(30,80){\circle*{1}}

\put(60,30){\circle*{1}}

\put(100,40){\circle*{1}}

\put(120,20){\circle*{1}}

\put(130,80){\circle*{1}}

\put(160,30){\circle*{1}}

\put(130,40){\circle*{1}}

\put(0,40){\line(3,4){30}}

\thinlines

\put(0,40){\line(6,-1){60}}

\thicklines

\put(0,40){\line(1,-1){20}}

\put(20,20){\line(4,1){40}}

\put(20,20){\line(1,6){10}}

\put(30,80){\line(3,-5){30}}

\put(100,40){\line(3,4){30}}

\put(100,40){\line(1,-1){20}}

\put(120,20){\line(4,1){40}}

\put(120,20){\line(1,6){10}}

\put(130,80){\line(3,-5){30}}

\thinlines

\put(100,40){\line(6,-1){60}}

\put(100,40){\line(1,0){30}}

\put(130,40){\line(0,1){40}}

\put(130,40){\line(3,-1){30}}

\put(130,40){\line(-1,-2){10}}

\thicklines

\put(66,50){\vector(1,0){28}}

\put(94,50){\vector(-1,0){28}}


\put(210,60){\circle*{1}}

\put(230,40){\circle*{1}}

\put(240,100){\circle*{1}}

\put(270,50){\circle*{1}}

\put(240,0){\circle*{1}}

\put(310,60){\circle*{1}}

\put(330,40){\circle*{1}}

\put(340,100){\circle*{1}}

\put(370,50){\circle*{1}}

\put(340,0){\circle*{1}}

\put(210,60){\line(3,4){30}}

\thinlines

\put(210,60){\line(6,-1){60}}

\thicklines

\put(210,60){\line(1,-1){20}}

\put(230,40){\line(4,1){40}}

\put(230,40){\line(1,6){10}}

\put(240,100){\line(3,-5){30}}

\put(210,60){\line(1,-2){30}}

\put(230,40){\line(1,-4){10}}

\put(240,0){\line(3,5){30}}

\put(310,60){\line(3,4){30}}

\put(310,60){\line(1,-1){20}}

\put(330,40){\line(4,1){40}}

\put(330,40){\line(1,6){10}}

\put(340,100){\line(3,-5){30}}

\put(310,60){\line(1,-2){30}}

\put(330,40){\line(1,-4){10}}

\put(340,0){\line(3,5){30}}

\thinlines

\put(310,60){\line(6,-1){60}}

\put(340,0){\line(0,1){100}}

\thicklines

\put(276,50){\vector(1,0){28}}

\put(304,50){\vector(-1,0){28}}

\end{picture}
}

\caption{Bistellar moves for manifolds of dimension~$3$}\label{fig_bm3}
\end{figure}

Suppose that $K_1$ and $K_2$ are oriented combinatorial manifolds of the same dimension and $\sigma_1\in K_1$ and $\sigma_2\in K_2$ are simplices such that there exist bistellar moves
$\beta_{K_1,\sigma_1}$ and $\beta_{K_2,\sigma_2}$. The bistellar moves~$\beta_{K_1,\sigma_1}$ and $\beta_{K_2,\sigma_2}$ are said to be \textit{equivalent} if there exists an isomorphism
$f:K_1\to K_2$ such that $f(\sigma_1)=\sigma_2$. A bistellar move $\beta$ is said to be \textit{inessential} if
$\beta$ is equivalent to~$\beta^{-1}$. All other bistellar moves are said to be \textit{essential}.

If $k\ne 0,n$, then the sets of vertices of combinatorial manifolds~$K$ and~$K_1$ coincide; if either $k=0$ or $k=n$, then either the set~$V(K)$ or the set~$V(K_1)$ contains one superfluous vertex. In the first case we put $V(\beta)=V(K)=V(K_1)$; in the second case we denote by $V(\beta)$ the larger of the sets~$V(K)$ and~$V(K_1)$. Besides, we put
$U(\beta)=V\bigl(\partial\sigma^k*\partial\tau^{n-k}\bigr)\subset
V(\beta)$. The set $U(\beta)$ consists of those vertices that neither appear nor disappear under the bistellar move~$\beta$ and whose links change under~$\beta$. For any vertex~$v\in U(\beta)$ the bistellar move~$\beta$ induces the bistellar move
$\beta_v=\beta_{\Lk_Kv,\sigma^k\setminus\{v\}}$ transforming the combinatorial sphere~$\Lk_Kv$ to the combinatorial sphere~$\Lk_{K_1}v$.

In 1987, Pachner~\cite{Pac87} (see also~\cite{Pac91})
proved the following theorem.

\begin{theorem}
Suppose $K_1$ and $K_2$ are piecewise linearly homeomorphic combinatorial manifolds. Then $K_1$ can be transformed to~$K_2$ by a finite sequence of bistellar moves and isomorphisms.
\end{theorem}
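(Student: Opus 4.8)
The plan is to reduce the theorem to the classical theory of stellar subdivisions and then to show, by induction on dimension, that a stellar subdivision of a combinatorial manifold can always be realised by bistellar moves. Since bistellar moves are reversible ($\beta^{-1}$ recovers $K$ from $\beta(K)$) and the relation ``$K_1$ is obtained from $K_2$ by bistellar moves and isomorphisms'' is an equivalence relation, it suffices to prove the single implication: if $K_1$ and $K_2$ are piecewise linearly homeomorphic combinatorial manifolds, then $K_2$ can be obtained from $K_1$ by a finite sequence of bistellar moves and isomorphisms.

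First I would invoke the classical subdivision theorem of Alexander and Newman (see~\cite{RS74}): two simplicial complexes are piecewise linearly homeomorphic if and only if one can be obtained from the other by a finite sequence of stellar subdivisions, inverse stellar subdivisions, and isomorphisms; moreover, when the complexes are combinatorial manifolds the sequence can be chosen to pass only through combinatorial manifolds. As an inverse stellar subdivision is just the reverse of a stellar subdivision, the theorem reduces to the following claim: \emph{if $\sigma$ is a simplex of a combinatorial $n$-manifold $K$ and $K^\sigma$ denotes the stellar subdivision of $K$ at $\sigma$, then $K^\sigma$ can be obtained from $K$ by bistellar moves and isomorphisms.} This claim I would prove by induction on $n$, and for fixed $n$ by downward induction on $\dim\sigma$. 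The base case $\dim\sigma=n$ is immediate: then $\St_K\sigma=\sigma$, the stellar subdivision simply replaces the top-dimensional simplex $\sigma$ by the cone over its boundary, and this is precisely the bistellar move $\beta_{K,\sigma}$ of the degenerate type $k=n$ described in Section~\ref{section_bistellar} (with the conventions $\partial\,\pt=\varnothing$ and $\sigma*\varnothing=\sigma$). For the inductive step, with $\dim\sigma=k<n$, the link $\Lk_K\sigma$ is a combinatorial $(n-k-1)$-sphere; applying the theorem in dimension $n-k-1<n$ one can first move $\Lk_K\sigma$ by bistellar moves into the boundary of a simplex, after which $\St_K\sigma$ becomes a standard bipyramid and the starring at $\sigma$ can be resolved into a sequence of bistellar moves by a direct analysis of the triangulations of this bipyramid relative to its boundary.

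The step I expect to be the main obstacle has two intertwined parts. First, a bistellar move performed on the link $\Lk_K\sigma$ at a simplex $\rho$ with empty simplex $\mu$ corresponds formally to the bistellar move of $K$ at $\sigma*\rho$ with empty simplex $\mu$, but this is a legitimate bistellar move of $K$ only when $\sigma*\rho*\partial\mu$ is a \emph{full} subcomplex of $K$ and $\mu\notin K$ — and both conditions may be violated by simplices of $K$ lying outside the star of $\sigma$. Overcoming this requires first passing to a subdivision of $K$ that is fine near $\sigma$ (for instance a barycentric subdivision of $\St_K\sigma$, itself bistellarly reachable by the inductive hypotheses) so that every move needed on the link is supported within a small regular neighbourhood inside $\St_K\sigma$ and is therefore invisible to the rest of $K$. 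Second, even once $\St_K\sigma$ is a standard bipyramid one must still exhibit an explicit sequence of bistellar moves, fixed on the boundary, carrying its bipyramidal triangulation to the coned triangulation produced by the starring; this is the combinatorial core. Organising both parts so that they interlock correctly with the induction on dimension is exactly the content of Pachner's argument~\cite{Pac87},~\cite{Pac91}, and is where essentially all of the real work lies.
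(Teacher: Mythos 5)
This statement is Pachner's theorem; the paper does not prove it at all, but simply quotes it from~\cite{Pac87},~\cite{Pac91}, so there is no in-paper argument to compare with. Judged on its own terms, your text is a plan rather than a proof: after the legitimate reduction via Alexander--Newman stellar theory, the two steps that carry all the content --- (i) turning bistellar moves of $\Lk_K\sigma$ into legitimate bistellar moves of $K$, and (ii) carrying the bipyramidal triangulation of $\St_K\sigma$ to the starred one by moves that do not disturb the rest of $K$ --- are exactly the ones you label ``the main obstacle'' and attribute to Pachner. Since by your own account this is where essentially all the work lies, the proposal does not establish the theorem; it only explains why the theorem is plausible modulo the reference you would cite anyway.

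Beyond this, the one concrete device you do offer is not justified and is close to circular. You propose to make $K$ fine near $\sigma$ by a barycentric subdivision of $\St_K\sigma$, ``itself bistellarly reachable by the inductive hypotheses.'' But the only simplices one can star at without altering $K$ outside the star are the cofaces of $\sigma$: starring at proper cofaces is covered by your downward induction, yet it does not produce a subdivision fine near $\sigma$ (in particular it does not shrink the star of $\sigma$), while starring at $\sigma$ itself is precisely the claim being proved. So the inductive hypotheses as you have set them up do not yield the auxiliary subdivision you need, and the fullness/empty-simplex obstruction to transferring link moves remains untreated. Likewise, step (ii) --- connecting two triangulations of a ball that agree on the boundary by bistellar moves supported away from the boundary --- is a genuinely delicate statement, not a ``direct analysis''; Pachner's published argument goes through elementary shellings in~\cite{Pac91} precisely because the naive induction sketched here is hard to close. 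As it stands, then, the proposal has a genuine gap: its core combinatorial steps are either missing or rest on an unproved (and as formulated, circular) auxiliary claim.
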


In particular, any two combinatorial spheres of the same dimension can be transformed to each other by a sequence of bistellar moves and isomorphisms. Therefore to decribe functions
$f\in\T^4(\Q)$ such that $\delta f=0$ we suffice to describe how the value~$f\bigl(\langle L\rangle\bigr)$ changes under bistellar moves of a three-dimensional combinatorial sphere~$L$. We shall need the following additional constructions.

For any $n$, we construct an infinite graph~$\Gamma_n$ in the following way. Vertices of~$\Gamma_n$ correspond to isomorphism classes of oriented $n$-dimensional combinatorial spheres. Both the isomorphism class of a combinatorial sphere~$L$ and the corresponding vertex of the graph~$\Gamma_n$ are denoted by~$\{L\}$. We stress that  $\{L\}$ and
$\{-L\}$ are distinct vertices unless $L$ possesses an orientation reversing automorphism. Edges of~$\Gamma_n$ correspond to equivalence classes of essential bistellar moves of $n$-dimensional combinatorial spheres. The edges corresponding to the equivalence classes of bistellar moves $\beta$ and $\beta^{-1}$ coincide but have opposite orientations.
(The graph~$\Gamma_n$ can contain multiple edges and loops.) The oriented edge corresponding to the equivalence class of a bistellar move~$\beta$ will be denoted by~$\{\beta\}$. Thus, $\{\beta^{-1}\}=-\{\beta\}$.

The group~$\Z_2$ acts on the graph~$\Gamma_n$ by reversing the orientations of combinatorial spheres. By $\CQ$ we denote the group $\Q$ regarded as a  $\Z_2$-module such that the generator of
$\Z_2$ acts by multiplication by~$-1$. By $C_{\Z_2}^i(\Gamma_n;\CQ)$ and $H_{\Z_2}^i(\Gamma_n;\CQ)$, $i=0,1$, we denote the $i$-dimensional equivariant cellular cochain groups and the $i$-dimensional equivariant cohomology groups of~$\Gamma_n$ with respect to the described actions of the group~$\Z_2$. The differential of the cochain complex $C_{\Z_2}^*(\Gamma_n;\CQ)$ will be denoted by~$d$. Pachner's theorem implies that the graph $\Gamma_n$ is connected.
Hence, $H_{\Z_2}^0(\Gamma_n;\CQ)=0$. Obviously, the group
$C_{\Z_2}^0(\Gamma_n;\CQ)$ is canonically isomorphic to the group~$\T^{n+1}(\Q)$. Therefore we have the differential
$$
\delta:C_{\Z_2}^0(\Gamma_n;\CQ)\to C_{\Z_2}^0(\Gamma_{n+1};\CQ).
$$
Define the differential
$$
\delta:C_{\Z_2}^1(\Gamma_n;\CQ)\to C_{\Z_2}^1(\Gamma_{n+1};\CQ)
$$
by
$$
(\delta h)\bigl(\{\beta\}\bigr)=(-1)^n\sum_{v\in
U(\beta)}h\bigl(\{\beta_v\}\bigr).
$$

It is easy to check that $d\delta=\delta d$. Hence the monomorphism~$d$ is a chain mapping of the cochain complex~$\bigl(C_{\Z_2}^0(\Gamma_*;\CQ),\delta\bigr)$ to the cochain complex~$\bigl(C_{\Z_2}^1(\Gamma_*;\CQ),\delta\bigr)$. This chain mapping is null-homotopic. The chain homotopy can be constructed in the following way. Let $\beta$ be a bistellar move transforming an oriented $(n-1)$-dimensional sphere~$L_1$ to a combinatorial sphere~$L_2$ and replacing a full subcomplex
$\sigma*\partial\tau\subset L_1$ with a full subcomplex
$\partial\sigma*\tau\subset L_2$. Consider the abstract simplicial complex~$L_{\beta}$ on the vertex set 
$V(\beta)\sqcup\{u_1,u_2\}$ consisting of all simplices $\rho\in
L_1\cup L_2$, all simplices $\rho\cup\{u_1\}$ such that
$\rho\in L_1$, all simplices $\rho\cup\{u_2\}$ such that $\rho\in L_2$, and the simplex~$\sigma\cup\tau$. It can be immediately checked that the complex~$L_{\beta}$ is an $n$-dimensional combinatorial sphere. Orient the sphere~$L_{\beta}$ so that the natural isomorphism between the link of the vertex $u_2$ and the combinatorial sphere~$L_2$ preserves the orientation. Then the natural isomorphism between the link of the vertex~$u_1$ and~$L_1$ reverses the orientation.
Define the homomorphism
$$s:C_{\Z_2}^0(\Gamma_n;\CQ)\to
C_{\Z_2}^1(\Gamma_{n-1};\CQ)$$ by
$$
s(f)\bigl(\{\beta\}\bigr)=(-1)^{n-1}f\bigl(\{ L_{\beta}\}\bigr).
$$
It can be easily checked that $d=\delta s-s\delta$.

Let us now consider in more details the cases~$n=1$ and~$n=2$. The graph~$\Gamma_1$ is isomorphic to the graph with vertices indexed by natural numbers not less than~$3$ and a unique edge connecting the vertices~$k$ and~$k+1$ for every~$k\ge 3$. The group
$\Z_2$ acts on $\Gamma_1$ trivially. Therefore,
$C_{\Z_2}^0(\Gamma_1;\CQ)=C_{\Z_2}^1(\Gamma_1;\CQ)=0$.

In the graph~$\Gamma_2$, we consider two types of cycles, which we shall call \textit{elementary cycles}. The first type of elementary cycles corresponds to the ``commutation'' of two bistellar moves. Suppose $L$ is an oriented two-dimensional combinatorial sphere and $\sigma_1$ and $\sigma_2$ are simplices of~$L$ such that

1) no simplex of~$L$ contains both~$\sigma_1$ and~$\sigma_2$,

2) $\Lk\sigma_i=\partial\tau_i$, for some ``empty simplices''
$\tau_i$, $i=1,2$, and

3) there exist bistellar moves $\beta_1=\beta_{L,\sigma_1}$,
$\beta_2=\beta_{L_2,\sigma_2}$, $\beta_3=\beta_{L_3,\tau_1}$, and
$\beta_4=\beta_{L_4,\tau_2}$, where $L_2=\beta_1(L)$,
$L_3=\beta_2(L_2)$ и $L_4=\beta_3(L_3)$.

Then we have
$\beta_4(L_4)\cong L$. Therefore we obtain a cycle consisting of edges  $\{\beta_i\}$,
$i=1,2,3,4$, in the graph~$\Gamma_2$. We denote this cycle by
$\gamma(L,\sigma_1,\sigma_2)$. Different elementary cycles of the first type are shown in Fig.~\ref{fig_comm},~\textit{a--i}. Elementary cycles of the second type are the cycles in the graph~$\Gamma_2$ shown in Fig.~\ref{fig_dop},~\textit{a--c}. For elementary cycles of both the first and the second types one should omit all inessential bistellar moves.

In the sequel, under a cycle of a graph we always mean a cellular $1$-chain with zero boundary rather than a closed sequence of edges.

\begin{figure}[t]

{\unitlength=0.5mm

\begin{picture}(240,400)

\thicklines

\put(2,332){\begin{picture}(102,56)


\put(0,39){
     \begin{picture}(47,16)
     \put(1,1){\circle*{1}}
     \put(11,16){\circle*{1}}
     \put(21,1){\circle*{1}}
     \put(26,1){\circle*{1}}
     \put(36,16){\circle*{1}}
     \put(46,1){\circle*{1}}
     \put(1,1){\line(1,0){20}}
     \put(1,1){\line(2,3){10}}
     \put(11,16){\line(2,-3){10}}
     \put(26,1){\line(1,0){20}}
     \put(26,1){\line(2,3){10}}
     \put(36,16){\line(2,-3){10}}
     \end{picture}
    }
\put(55,39){
     \begin{picture}(47,16)
     \put(1,1){\circle*{1}}
     \put(11,16){\circle*{1}}
     \put(21,1){\circle*{1}}
     \put(26,1){\circle*{1}}
     \put(36,16){\circle*{1}}
     \put(46,1){\circle*{1}}
     \put(1,1){\line(1,0){20}}
     \put(1,1){\line(2,3){10}}
     \put(11,16){\line(2,-3){10}}
     \put(26,1){\line(1,0){20}}
     \put(26,1){\line(2,3){10}}
     \put(36,16){\line(2,-3){10}}
     \end{picture}
    }
\put(55,9){
     \begin{picture}(47,16)
     \put(1,1){\circle*{1}}
     \put(11,16){\circle*{1}}
     \put(21,1){\circle*{1}}
     \put(26,1){\circle*{1}}
     \put(36,16){\circle*{1}}
     \put(46,1){\circle*{1}}
     \put(1,1){\line(1,0){20}}
     \put(1,1){\line(2,3){10}}
     \put(11,16){\line(2,-3){10}}
     \put(26,1){\line(1,0){20}}
     \put(26,1){\line(2,3){10}}
     \put(36,16){\line(2,-3){10}}
     \end{picture}
    }
\put(0,9){
     \begin{picture}(47,16)
     \put(1,1){\circle*{1}}
     \put(11,16){\circle*{1}}
     \put(21,1){\circle*{1}}
     \put(26,1){\circle*{1}}
     \put(36,16){\circle*{1}}
     \put(46,1){\circle*{1}}
     \put(1,1){\line(1,0){20}}
     \put(1,1){\line(2,3){10}}
     \put(11,16){\line(2,-3){10}}
     \put(26,1){\line(1,0){20}}
     \put(26,1){\line(2,3){10}}
     \put(36,16){\line(2,-3){10}}
     \end{picture}
    }
\put(47.5,47){\vector(1,0){10}} \put(81,35){\vector(0,-1){10}}
\put(57.5,17){\vector(-1,0){10}} \put(26,25){\vector(0,1){10}}
\put(56,40){
    \begin{picture}(20,15)
    \put(10,5){\circle*{1}}
    \put(0,0){\line(2,1){10}}
    \put(10,5){\line(0,1){10}}
    \put(10,5){\line(2,-1){10}}
    \end{picture}
    }
\put(56,10){
    \begin{picture}(20,15)
    \put(10,5){\circle*{1}}
    \put(0,0){\line(2,1){10}}
    \put(10,5){\line(0,1){10}}
    \put(10,5){\line(2,-1){10}}
    \end{picture}
    }
\put(81,10){
    \begin{picture}(20,15)
    \put(10,5){\circle*{1}}
    \put(0,0){\line(2,1){10}}
    \put(10,5){\line(0,1){10}}
    \put(10,5){\line(2,-1){10}}
    \end{picture}
    }
\put(26,10){
    \begin{picture}(20,15)
    \put(10,5){\circle*{1}}
    \put(0,0){\line(2,1){10}}
    \put(10,5){\line(0,1){10}}
    \put(10,5){\line(2,-1){10}}
    \end{picture}
    }
\end{picture}
}


\put(132,332){\begin{picture}(102,56) \put(0,39){
     \begin{picture}(47,16)
     \put(1,1){\circle*{1}}
     \put(1,16){\circle*{1}}
     \put(19.75,8.5){\circle*{1}}
     \put(38.5,16){\circle*{1}}
     \put(38.5,1){\circle*{1}}
     \put(1,1){\line(0,1){15}}
     \put(1,1){\line(5,2){18.75}}
     \put(1,16){\line(5,-2){18.75}}
     \put(38.5,1){\line(0,1){15}}
     \put(38.5,1){\line(-5,2){18.75}}
     \put(38.5,16){\line(-5,-2){18.75}}
     \qbezier(22.5,9.9)(19.75,12)(16.25,9.9)
     \put(18,14.5){$p$}
     \qbezier(22.5,7.1)(19.75,5)(16.25,7.1)
     \put(18,1){$q$}
     \end{picture}
    }
\put(62.5,39){
     \begin{picture}(47,16)
     \put(1,1){\circle*{1}}
     \put(1,16){\circle*{1}}
     \put(19.75,8.5){\circle*{1}}
     \put(38.5,16){\circle*{1}}
     \put(38.5,1){\circle*{1}}
     \put(4.75,8.5){\circle*{1}}
     \put(1,1){\line(0,1){15}}
     \put(1,1){\line(5,2){18.75}}
     \put(1,16){\line(5,-2){18.75}}
     \put(38.5,1){\line(0,1){15}}
     \put(38.5,1){\line(-5,2){18.75}}
     \put(38.5,16){\line(-5,-2){18.75}}
     \put(1,1){\line(1,2){3.75}}
     \put(1,16){\line(1,-2){3.75}}
     \put(4.75,8.5){\line(1,0){15}}
     \end{picture}
    }
\put(62.5,9){
     \begin{picture}(47,16)
     \put(1,1){\circle*{1}}
     \put(1,16){\circle*{1}}
     \put(19.75,8.5){\circle*{1}}
     \put(38.5,16){\circle*{1}}
     \put(38.5,1){\circle*{1}}
     \put(4.75,8.5){\circle*{1}}
     \put(34.75,8.5){\circle*{1}}
     \put(1,1){\line(0,1){15}}
     \put(1,1){\line(5,2){18.75}}
     \put(1,16){\line(5,-2){18.75}}
     \put(38.5,1){\line(0,1){15}}
     \put(38.5,1){\line(-5,2){18.75}}
     \put(38.5,16){\line(-5,-2){18.75}}
     \put(1,1){\line(1,2){3.75}}
     \put(1,16){\line(1,-2){3.75}}
     \put(4.75,8.5){\line(1,0){15}}
     \put(38.5,1){\line(-1,2){3.75}}
     \put(38.5,16){\line(-1,-2){3.75}}
     \put(19.75,8.5){\line(1,0){15}}
     \end{picture}
    }
\put(0,9){
     \begin{picture}(47,16)
     \put(1,1){\circle*{1}}
     \put(1,16){\circle*{1}}
     \put(19.75,8.5){\circle*{1}}
     \put(38.5,16){\circle*{1}}
     \put(38.5,1){\circle*{1}}
     \put(34.75,8.5){\circle*{1}}
     \put(1,1){\line(0,1){15}}
     \put(1,1){\line(5,2){18.75}}
     \put(1,16){\line(5,-2){18.75}}
     \put(38.5,1){\line(0,1){15}}
     \put(38.5,1){\line(-5,2){18.75}}
     \put(38.5,16){\line(-5,-2){18.75}}
     \put(38.5,1){\line(-1,2){3.75}}
     \put(38.5,16){\line(-1,-2){3.75}}
     \put(19.75,8.5){\line(1,0){15}}
     \end{picture}
    }
\put(47.5,47){\vector(1,0){10}} \put(84.5,35){\vector(0,-1){10}}
\put(57.5,17){\vector(-1,0){10}} \put(22,25){\vector(0,1){10}}


\end{picture}
}


\put(4,249){\begin{picture}(102,56) \put(0,39){
     \begin{picture}(47,16)
     \put(1,8.5){\circle*{1}}
     \put(19.75,16){\circle*{1}}
     \put(19.75,1){\circle*{1}}
     \put(38.5,8.5){\circle*{1}}
     \put(19.75,1){\line(0,1){15}}
     \put(1,8.5){\line(5,2){18.75}}
     \put(1,8.5){\line(5,-2){18.75}}
     \put(38.5,8.5){\line(-5,2){18.75}}
     \put(38.5,8.5){\line(-5,-2){18.75}}
     \qbezier(17.75,15.3)(16.75,19)(19.75,19)
     \qbezier(19.75,19)(22.75,19)(21.75,15.3)
     \put(18,22){$p$}
     \qbezier(17.75,1.7)(16.75,-2)(19.75,-2)
     \qbezier(19.75,-2)(22.75,-2)(21.75,1.7)
     \put(20.5,-5.5){$q$}
     \end{picture}
    }
\put(62.5,39){
     \begin{picture}(47,16)
     \put(1,8.5){\circle*{1}}
     \put(19.75,16){\circle*{1}}
     \put(19.75,1){\circle*{1}}
     \put(38.5,8.5){\circle*{1}}
     \put(19.75,1){\line(0,1){15}}
     \put(1,8.5){\line(5,2){18.75}}
     \put(1,8.5){\line(5,-2){18.75}}
     \put(38.5,8.5){\line(-5,2){18.75}}
     \put(38.5,8.5){\line(-5,-2){18.75}}
     \put(16,8.5){\circle*{1}}
     \put(16,8.5){\line(1,2){3.75}}
     \put(16,8.5){\line(1,-2){3.75}}
     \put(1,8.5){\line(1,0){15}}
     \end{picture}
    }
\put(62.5,9){
     \begin{picture}(47,16)
     \put(1,8.5){\circle*{1}}
     \put(19.75,16){\circle*{1}}
     \put(19.75,1){\circle*{1}}
     \put(38.5,8.5){\circle*{1}}
     \put(19.75,1){\line(0,1){15}}
     \put(1,8.5){\line(5,2){18.75}}
     \put(1,8.5){\line(5,-2){18.75}}
     \put(38.5,8.5){\line(-5,2){18.75}}
     \put(38.5,8.5){\line(-5,-2){18.75}}
     \put(16,8.5){\circle*{1}}
     \put(16,8.5){\line(1,2){3.75}}
     \put(16,8.5){\line(1,-2){3.75}}
     \put(1,8.5){\line(1,0){15}}
     \put(23.5,8.5){\circle*{1}}
     \put(23.5,8.5){\line(-1,2){3.75}}
     \put(23.5,8.5){\line(-1,-2){3.75}}
     \put(23.5,8.5){\line(1,0){15}}
\end{picture}
    }
\put(0,9){
     \begin{picture}(47,16)
     \put(1,8.5){\circle*{1}}
     \put(19.75,16){\circle*{1}}
     \put(19.75,1){\circle*{1}}
     \put(38.5,8.5){\circle*{1}}
     \put(19.75,1){\line(0,1){15}}
     \put(1,8.5){\line(5,2){18.75}}
     \put(1,8.5){\line(5,-2){18.75}}
     \put(38.5,8.5){\line(-5,2){18.75}}
     \put(38.5,8.5){\line(-5,-2){18.75}}
     \put(23.5,8.5){\circle*{1}}
     \put(23.5,8.5){\line(-1,2){3.75}}
     \put(23.5,8.5){\line(-1,-2){3.75}}
     \put(23.5,8.5){\line(1,0){15}}
     \end{picture}
    }
\put(49.5,47.5){\vector(1,0){10}} \put(85,37.5){\vector(0,-1){10}}
\put(59.5,17.5){\vector(-1,0){10}} \put(22.5,27){\vector(0,1){8}}


\end{picture}
}


\put(133,249){\begin{picture}(102,56)

\put(0,39){
     \begin{picture}(47,16)
     \put(1,1){\circle*{1}}
     \put(11,16){\circle*{1}}
     \put(21,1){\circle*{1}}
     \put(26,1){\circle*{1}}
     \put(26,16){\circle*{1}}
     \put(41,1){\circle*{1}}
     \put(41,16){\circle*{1}}
     \put(1,1){\line(1,0){20}}
     \put(1,1){\line(2,3){10}}
     \put(11,16){\line(2,-3){10}}
     \put(26,1){\line(1,0){15}}
     \put(26,1){\line(0,1){15}}
     \put(26,16){\line(1,0){15}}
     \put(41,1){\line(0,1){15}}
     \put(26,16){\line(1,-1){15}}
     \end{picture}
    }
\put(55,39){
     \begin{picture}(47,16)
     \put(1,1){\circle*{1}}
     \put(11,16){\circle*{1}}
     \put(21,1){\circle*{1}}
     \put(26,1){\circle*{1}}
     \put(26,16){\circle*{1}}
     \put(41,1){\circle*{1}}
     \put(41,16){\circle*{1}}
     \put(1,1){\line(1,0){20}}
     \put(1,1){\line(2,3){10}}
     \put(11,16){\line(2,-3){10}}
     \put(26,1){\line(1,0){15}}
     \put(26,1){\line(0,1){15}}
     \put(26,16){\line(1,0){15}}
     \put(41,1){\line(0,1){15}}
     \put(26,16){\line(1,-1){15}}
     \end{picture}
    }
\put(55,9){
     \begin{picture}(47,16)
     \put(1,1){\circle*{1}}
     \put(11,16){\circle*{1}}
     \put(21,1){\circle*{1}}
     \put(26,1){\circle*{1}}
     \put(26,16){\circle*{1}}
     \put(41,1){\circle*{1}}
     \put(41,16){\circle*{1}}
     \put(1,1){\line(1,0){20}}
     \put(1,1){\line(2,3){10}}
     \put(11,16){\line(2,-3){10}}
     \put(26,1){\line(1,0){15}}
     \put(26,1){\line(0,1){15}}
     \put(26,16){\line(1,0){15}}
     \put(41,1){\line(0,1){15}}
     \put(26,1){\line(1,1){15}}
     \end{picture}
    }
\put(0,9){
     \begin{picture}(47,16)
     \put(1,1){\circle*{1}}
     \put(11,16){\circle*{1}}
     \put(21,1){\circle*{1}}
     \put(26,1){\circle*{1}}
     \put(26,16){\circle*{1}}
     \put(41,1){\circle*{1}}
     \put(41,16){\circle*{1}}
     \put(1,1){\line(1,0){20}}
     \put(1,1){\line(2,3){10}}
     \put(11,16){\line(2,-3){10}}
     \put(26,1){\line(1,0){15}}
     \put(26,1){\line(0,1){15}}
     \put(26,16){\line(1,0){15}}
     \put(41,1){\line(0,1){15}}
     \put(26,1){\line(1,1){15}}
     \end{picture}
    }
\put(46,47){\vector(1,0){10}} \put(77,35){\vector(0,-1){10}}
\put(56,17){\vector(-1,0){10}} \put(23,25){\vector(0,1){10}}
\put(56,40){
\begin{picture}(20,15)
    \put(10,5){\circle*{1}}
    \put(0,0){\line(2,1){10}}
    \put(10,5){\line(0,1){10}}
    \put(10,5){\line(2,-1){10}}
    \end{picture}
    }
\put(56,10){
    \begin{picture}(20,15)
    \put(10,5){\circle*{1}}
    \put(0,0){\line(2,1){10}}
    \put(10,5){\line(0,1){10}}
    \put(10,5){\line(2,-1){10}}
    \end{picture}
    }

\end{picture}
}


\put(5,166){\begin{picture}(102,60)

\put(0,39){
     \begin{picture}(47,16)
     \put(1,1){\circle*{1}}
     \put(1,16){\circle*{1}}
     \put(19.75,8.5){\circle*{1}}
     \put(43.75,8.5){\circle*{1}}
     \put(31.75,20.5){\circle*{1}}
     \put(31.75,-3.5){\circle*{1}}
     \put(1,1){\line(0,1){15}}
     \put(1,1){\line(5,2){18.75}}
     \put(1,16){\line(5,-2){18.75}}
     \put(31.75,-3.5){\line(0,1){24}}

     \put(19.75,8.5){\line(1,1){12}}
     \put(19.75,8.5){\line(1,-1){12}}
     \put(31.75,-3.5){\line(1,1){12}}
     \put(31.75,20.5){\line(1,-1){12}}
     \qbezier(21,10.25)(17.5,12.2)(16.25,9.9)
     \put(17,17){$p$}
     \qbezier(21,6.75)(17.5,4.7)(16.25,7.1)
     \put(17,-1){$q$}
     \end{picture}
    }
\put(55,39){
     \begin{picture}(47,16)
     \put(1,1){\circle*{1}}
     \put(1,16){\circle*{1}}
     \put(19.75,8.5){\circle*{1}}
     \put(43.75,8.5){\circle*{1}}
     \put(31.75,20.5){\circle*{1}}
     \put(31.75,-3.5){\circle*{1}}
     \put(4.75,8.5){\circle*{1}}
     \put(1,1){\line(0,1){15}}
     \put(1,1){\line(5,2){18.75}}
     \put(1,16){\line(5,-2){18.75}}
     \put(31.75,-3.5){\line(0,1){24}}
     \put(19.75,8.5){\line(1,1){12}}
     \put(19.75,8.5){\line(1,-1){12}}
     \put(31.75,-3.5){\line(1,1){12}}
     \put(31.75,20.5){\line(1,-1){12}}
     \put(4.75,8.5){\line(1,0){15}}
     \put(1,1){\line(1,2){3.75}}
     \put(1,16){\line(1,-2){3.75}}
     \end{picture}
    }
\put(55,9){
     \begin{picture}(47,16)
     \put(1,1){\circle*{1}}
     \put(1,16){\circle*{1}}
     \put(19.75,8.5){\circle*{1}}
     \put(43.75,8.5){\circle*{1}}
     \put(31.75,20.5){\circle*{1}}
     \put(31.75,-3.5){\circle*{1}}
     \put(4.75,8.5){\circle*{1}}
     \put(1,1){\line(0,1){15}}
     \put(1,1){\line(5,2){18.75}}
     \put(1,16){\line(5,-2){18.75}}
     \put(19.75,8.5){\line(1,0){24}}
     \put(19.75,8.5){\line(1,1){12}}
     \put(19.75,8.5){\line(1,-1){12}}
     \put(31.75,-3.5){\line(1,1){12}}
     \put(31.75,20.5){\line(1,-1){12}}
     \put(4.75,8.5){\line(1,0){15}}
     \put(1,1){\line(1,2){3.75}}
     \put(1,16){\line(1,-2){3.75}}
     \end{picture}
    }
\put(0,9){
     \begin{picture}(47,16)
     \put(1,1){\circle*{1}}
     \put(1,16){\circle*{1}}
     \put(19.75,8.5){\circle*{1}}
     \put(43.75,8.5){\circle*{1}}
     \put(31.75,20.5){\circle*{1}}
     \put(31.75,-3.5){\circle*{1}}

     \put(1,1){\line(0,1){15}}
     \put(1,1){\line(5,2){18.75}}
     \put(1,16){\line(5,-2){18.75}}
     \put(19.75,8.5){\line(1,0){24}}
     \put(19.75,8.5){\line(1,1){12}}
     \put(19.75,8.5){\line(1,-1){12}}
     \put(31.75,-3.5){\line(1,1){12}}
     \put(31.75,20.5){\line(1,-1){12}}
     \end{picture}
    }
\put(47.3,47){\vector(1,0){9}} \put(77,35){\vector(0,-1){10}}
\put(56.3,17){\vector(-1,0){9}} \put(23,25){\vector(0,1){10}}

\end{picture}
}


\put(133,166){\begin{picture}(102,61)

\put(0,41){
     \begin{picture}(47,16)
     \put(1,8.5){\circle*{1}}
     \put(19.75,16){\circle*{1}}
     \put(19.75,1){\circle*{1}}
     \put(34.75,1){\circle*{1}}
     \put(34.75,16){\circle*{1}}
     \put(19.75,1){\line(0,1){15}}
     \put(1,8.5){\line(5,2){18.75}}
     \put(1,8.5){\line(5,-2){18.75}}
     \put(19.75,1){\line(1,0){15}}
     \put(19.75,16){\line(1,0){15}}
     \put(34.75,1){\line(0,1){15}}
     \put(19.75,16){\line(1,-1){15}}
     \qbezier(17.75,15.5)(17,18)(19.75,18)
     \qbezier(21.75,16)(21.75,18)(19.75,18)
     \qbezier(17.75,1.5)(17,-1)(19.75,-1)
     \qbezier(21.75,1)(21.75,-1)(19.75,-1)
     \end{picture}
    }
\put(61.25,41){
     \begin{picture}(47,16)
     \put(1,8.5){\circle*{1}}
     \put(19.75,16){\circle*{1}}
     \put(19.75,1){\circle*{1}}
     \put(34.75,1){\circle*{1}}
     \put(34.75,16){\circle*{1}}
     \put(16,8.5){\circle*{1}}
     \put(19.75,1){\line(0,1){15}}
     \put(1,8.5){\line(5,2){18.75}}
     \put(1,8.5){\line(5,-2){18.75}}
     \put(19.75,1){\line(1,0){15}}
     \put(19.75,16){\line(1,0){15}}
     \put(34.75,1){\line(0,1){15}}
     \put(19.75,16){\line(1,-1){15}}
     \put(16,8.5){\line(1,2){3.75}}
     \put(16,8.5){\line(1,-2){3.75}}
     \put(1,8.5){\line(1,0){15}}
     \end{picture}
    }
\put(61.25,9){
     \begin{picture}(47,16)
     \put(1,8.5){\circle*{1}}
     \put(19.75,16){\circle*{1}}
     \put(19.75,1){\circle*{1}}
     \put(34.75,1){\circle*{1}}
     \put(34.75,16){\circle*{1}}
     \put(16,8.5){\circle*{1}}
     \put(19.75,1){\line(0,1){15}}
     \put(1,8.5){\line(5,2){18.75}}
     \put(1,8.5){\line(5,-2){18.75}}
     \put(19.75,1){\line(1,0){15}}
     \put(19.75,16){\line(1,0){15}}
     \put(34.75,1){\line(0,1){15}}
     \put(19.75,1){\line(1,1){15}}
     \put(16,8.5){\line(1,2){3.75}}
     \put(16,8.5){\line(1,-2){3.75}}
     \put(1,8.5){\line(1,0){15}}
     \end{picture}
    }
\put(0,9){
     \begin{picture}(47,16)
     \put(1,8.5){\circle*{1}}
     \put(19.75,16){\circle*{1}}
     \put(19.75,1){\circle*{1}}
     \put(34.75,1){\circle*{1}}
     \put(34.75,16){\circle*{1}}
     \put(19.75,1){\line(0,1){15}}
     \put(1,8.5){\line(5,2){18.75}}
     \put(1,8.5){\line(5,-2){18.75}}
     \put(19.75,1){\line(1,0){15}}
     \put(19.75,16){\line(1,0){15}}
     \put(34.75,1){\line(0,1){15}}
     \put(19.75,1){\line(1,1){15}}
     \end{picture}
    }
\put(46,47){\vector(1,0){10}} \put(82,38){\vector(0,-1){10}}
\put(56,17){\vector(-1,0){10}} \put(23,29){\vector(0,1){9}}

\put(17,36){$p$} \put(17,60){$q$}

\end{picture}
}


\put(10,88){\begin{picture}(97,56) \put(-5,39){
     \begin{picture}(47,16)
     \put(6,1){\circle*{1}}
     \put(6,16){\circle*{1}}
     \put(21,1){\circle*{1}}
     \put(21,16){\circle*{1}}
     \put(26,1){\circle*{1}}
     \put(26,16){\circle*{1}}
     \put(41,1){\circle*{1}}
     \put(41,16){\circle*{1}}
     \put(6,1){\line(1,0){15}}
     \put(6,16){\line(1,0){15}}
     \put(6,1){\line(0,1){15}}
     \put(21,1){\line(0,1){15}}
     \put(6,16){\line(1,-1){15}}
     \put(26,1){\line(1,0){15}}
     \put(26,1){\line(0,1){15}}
     \put(26,16){\line(1,0){15}}
     \put(41,1){\line(0,1){15}}
     \put(26,16){\line(1,-1){15}}
     \end{picture}
    }
\put(50,39){
     \begin{picture}(47,16)

     \put(6,1){\circle*{1}}
     \put(6,16){\circle*{1}}
     \put(21,1){\circle*{1}}
     \put(21,16){\circle*{1}}
     \put(26,1){\circle*{1}}
     \put(26,16){\circle*{1}}
     \put(41,1){\circle*{1}}
     \put(41,16){\circle*{1}}
     \put(6,1){\line(1,0){15}}
     \put(6,16){\line(1,0){15}}
     \put(6,1){\line(0,1){15}}
     \put(21,1){\line(0,1){15}}
     \put(6,1){\line(1,1){15}}
     \put(26,1){\line(1,0){15}}
     \put(26,1){\line(0,1){15}}
     \put(26,16){\line(1,0){15}}
     \put(41,1){\line(0,1){15}}
     \put(26,16){\line(1,-1){15}}
     \end{picture}
    }
\put(50,9){
     \begin{picture}(47,16)

     \put(6,1){\circle*{1}}
     \put(6,16){\circle*{1}}
     \put(21,1){\circle*{1}}
     \put(21,16){\circle*{1}}
     \put(26,1){\circle*{1}}
     \put(26,16){\circle*{1}}
     \put(41,1){\circle*{1}}
     \put(41,16){\circle*{1}}
     \put(6,1){\line(1,0){15}}
     \put(6,16){\line(1,0){15}}
     \put(6,1){\line(0,1){15}}
     \put(21,1){\line(0,1){15}}
     \put(6,1){\line(1,1){15}}
     \put(26,1){\line(1,0){15}}
     \put(26,1){\line(0,1){15}}
     \put(26,16){\line(1,0){15}}
     \put(41,1){\line(0,1){15}}
     \put(26,1){\line(1,1){15}}
     \end{picture}
    }
\put(-5,9){
     \begin{picture}(47,16)

     \put(6,1){\circle*{1}}
     \put(6,16){\circle*{1}}
     \put(21,1){\circle*{1}}
     \put(21,16){\circle*{1}}
     \put(26,1){\circle*{1}}
     \put(26,16){\circle*{1}}
     \put(41,1){\circle*{1}}
     \put(41,16){\circle*{1}}
     \put(6,1){\line(1,0){15}}
     \put(6,16){\line(1,0){15}}
     \put(6,1){\line(0,1){15}}
     \put(21,1){\line(0,1){15}}
     \put(6,16){\line(1,-1){15}}
     \put(26,1){\line(1,0){15}}
     \put(26,1){\line(0,1){15}}
     \put(26,16){\line(1,0){15}}
     \put(41,1){\line(0,1){15}}
     \put(26,1){\line(1,1){15}}
     \end{picture}
    }
\put(43,47){\vector(1,0){10}} \put(75,38){\vector(0,-1){10}}
\put(53,17){\vector(-1,0){10}} \put(20,28){\vector(0,1){10}}

\end{picture}
}


\put(126,83){\begin{picture}(102,64)

\put(0,39){
     \begin{picture}(47,16)
     \put(1,13){\circle*{1}}
     \put(25,13){\circle*{1}}
     \put(13,1){\circle*{1}}
     \put(13,25){\circle*{1}}
     \put(37,1){\circle*{1}}
     \put(37,25){\circle*{1}}
     \put(49,13){\circle*{1}}
     \put(1,13){\line(1,1){12}}
     \put(1,13){\line(1,-1){12}}
     \put(13,1){\line(1,1){24}}
     \put(13,25){\line(1,-1){24}}
     \put(49,13){\line(-1,-1){12}}
     \put(49,13){\line(-1,1){12}}
     \put(13,1){\line(0,1){24}}
     \put(37,1){\line(0,1){24}}

     \qbezier(27,15.5)(25,18)(22.5,15.5)
     \put(23,21){$p$}

     \qbezier(27,10.5)(25,8)(22.5,10.5)
     \put(23,3){$q$}

     \end{picture}
    }
\put(62,39){
     \begin{picture}(47,16)

     \put(1,13){\circle*{1}}
     \put(25,13){\circle*{1}}
     \put(13,1){\circle*{1}}
     \put(13,25){\circle*{1}}
     \put(37,1){\circle*{1}}
     \put(37,25){\circle*{1}}
     \put(49,13){\circle*{1}}
     \put(1,13){\line(1,1){12}}
     \put(1,13){\line(1,-1){12}}
     \put(13,1){\line(1,1){24}}
     \put(13,25){\line(1,-1){24}}
     \put(49,13){\line(-1,-1){12}}
     \put(49,13){\line(-1,1){12}}
     \put(1,13){\line(1,0){24}}
     \put(37,1){\line(0,1){24}}
     \end{picture}
    }
\put(62,9){
     \begin{picture}(47,16)

     \put(1,13){\circle*{1}}
     \put(25,13){\circle*{1}}
     \put(13,1){\circle*{1}}
     \put(13,25){\circle*{1}}
     \put(37,1){\circle*{1}}
     \put(37,25){\circle*{1}}
     \put(49,13){\circle*{1}}
     \put(1,13){\line(1,1){12}}
     \put(1,13){\line(1,-1){12}}
     \put(13,1){\line(1,1){24}}
     \put(13,25){\line(1,-1){24}}
     \put(49,13){\line(-1,-1){12}}
     \put(49,13){\line(-1,1){12}}
     \put(1,13){\line(1,0){48}}

     \end{picture}
    }
\put(0,9){
     \begin{picture}(47,16)

     \put(1,13){\circle*{1}}
     \put(25,13){\circle*{1}}
     \put(13,1){\circle*{1}}
     \put(13,25){\circle*{1}}
     \put(37,1){\circle*{1}}
     \put(37,25){\circle*{1}}
     \put(49,13){\circle*{1}}
     \put(1,13){\line(1,1){12}}
     \put(1,13){\line(1,-1){12}}
     \put(13,1){\line(1,1){24}}
     \put(13,25){\line(1,-1){24}}
     \put(49,13){\line(-1,-1){12}}
     \put(49,13){\line(-1,1){12}}
     \put(13,1){\line(0,1){24}}
     \put(25,13){\line(1,0){24}}
     \end{picture}
    }
\put(53.5,51.5){\vector(1,0){9}} \put(89.5,42){\vector(0,-1){10}}
\put(62.5,21.5){\vector(-1,0){9}} \put(27.5,28){\vector(0,1){9}}
\end{picture}
}


\put(69,0){\begin{picture}(102,64)

\put(0,46){
     \begin{picture}(47,16)
     \put(1,1){\circle*{1}}
     \put(1,21){\circle*{1}}
     \put(21,1){\circle*{1}}
     \put(21,21){\circle*{1}}
     \put(41,1){\circle*{1}}
     \put(41,21){\circle*{1}}

     \put(1,1){\line(1,0){40}}
     \put(1,21){\line(1,0){40}}
     \put(1,1){\line(0,1){20}}
     \put(21,1){\line(0,1){20}}
     \put(41,1){\line(0,1){20}}

     \put(1,1){\line(1,1){20}}
     \put(21,1){\line(1,1){20}}
     \put(21,21){\oval(5,5)[t]}  \put(21,1){\oval(5,5)[b]}

     \end{picture}
    }
\put(55,46){
     \begin{picture}(47,16)

     \put(1,1){\circle*{1}}
     \put(1,21){\circle*{1}}
     \put(21,1){\circle*{1}}
     \put(21,21){\circle*{1}}
     \put(41,1){\circle*{1}}
     \put(41,21){\circle*{1}}

     \put(1,1){\line(1,0){40}}
     \put(1,21){\line(1,0){40}}
     \put(1,1){\line(0,1){20}}
     \put(21,1){\line(0,1){20}}
     \put(41,1){\line(0,1){20}}

     \put(1,21){\line(1,-1){20}}
     \put(21,1){\line(1,1){20}}
     \end{picture}
    }
\put(55,9){
     \begin{picture}(47,16)

     \put(1,1){\circle*{1}}
     \put(1,21){\circle*{1}}
     \put(21,1){\circle*{1}}
     \put(21,21){\circle*{1}}
     \put(41,1){\circle*{1}}
     \put(41,21){\circle*{1}}

     \put(1,1){\line(1,0){40}}
     \put(1,21){\line(1,0){40}}
     \put(1,1){\line(0,1){20}}
     \put(21,1){\line(0,1){20}}
     \put(41,1){\line(0,1){20}}

     \put(1,21){\line(1,-1){20}}
     \put(21,21){\line(1,-1){20}}
     \end{picture}
    }
\put(0,9){
     \begin{picture}(47,16)

     \put(1,1){\circle*{1}}
     \put(1,21){\circle*{1}}
     \put(21,1){\circle*{1}}
     \put(21,21){\circle*{2}}
     \put(41,1){\circle*{1}}
     \put(41,21){\circle*{1}}

     \put(1,1){\line(1,0){40}}
     \put(1,21){\line(1,0){40}}
     \put(1,1){\line(0,1){20}}
     \put(21,1){\line(0,1){20}}
     \put(41,1){\line(0,1){20}}

     \put(1,1){\line(1,1){20}}
     \put(21,21){\line(1,-1){20}}
     \end{picture}
    }
\put(46,57){\vector(1,0){10}} \put(79,43){\vector(0,-1){10}}
\put(56,19){\vector(-1,0){10}} \put(24,33){\vector(0,1){10}}

\put(19,40){$p$} \put(22,72){$q$}

\end{picture}
}

\it

\put(54,247){c}

\put(182,330){b}

\put(54,330){a}

\put(182,247){d}

\put(54,164){e}

\put(182,164){f}

\put(54,81){g}

\put(182,81){h}

\put(118,-2){i}

\end{picture}
}

\caption{Elementary cycles of the first type}\label{fig_comm}
\end{figure}
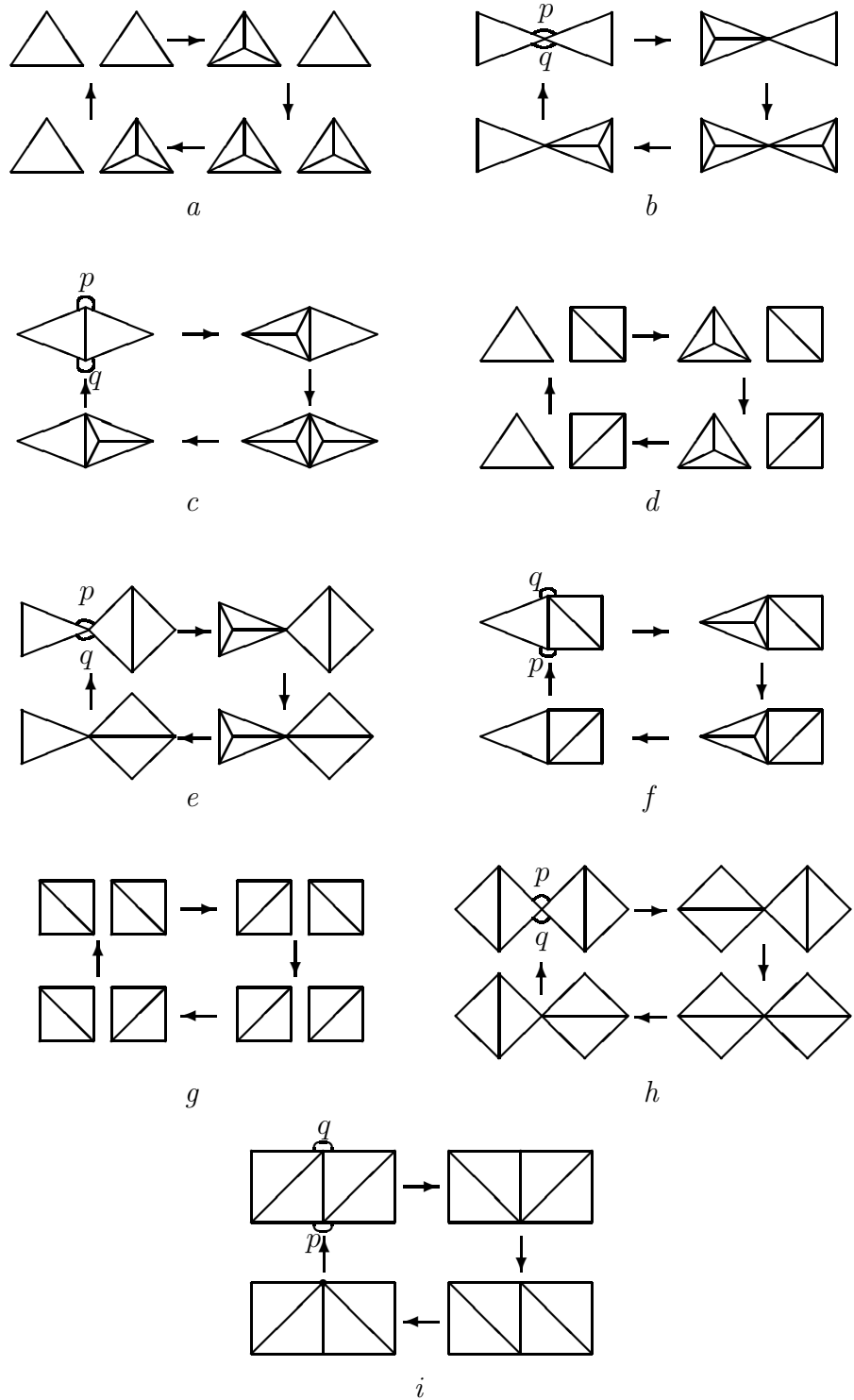

\begin{figure}[t]

{\unitlength=0.5mm
\begin{picture}(240,200)

\thicklines

\put(10,112){\begin{picture}(72,75)

\put(20,49){
     \begin{picture}(32,22)
     \put(1,1){\circle*{1}}
     \put(16,8.5){\circle*{1}}
     \put(16,21){\circle*{1}}
     \put(31,1){\circle*{1}}
     \put(1,1){\line(1,0){30}}
     \put(1,1){\line(3,4){15}}
     \put(1,1){\line(2,1){15}}
     \put(16,8.5){\line(0,1){12.5}}
     \put(16,8.5){\line(2,-1){15}}
     \put(16,21){\line(3,-4){15}}

     \put(1,1){\oval(3.5,3.5)[b]}
     \put(1,1){\oval(3.5,3.5)[tl]}
     \qbezier(1.5,2.4)(1.65625,2.7)(1,2.7)

     \put(31,1){\oval(3.5,3.5)[b]}
     \put(31,1){\oval(3.5,3.5)[tr]}
     \qbezier(29.95,2.4)(30.34375,2.7)(31,2.7)

     \put(15.7,21){\oval(3.5,3.5)[t]}
     \qbezier(14.5,19.6)(13.9,20)(13.9,21)
     \qbezier(16.85,19.6)(17.4,20)(17.4,21)
     \end{picture}
    }
\put(50,9){
     \begin{picture}(32,28)
     \put(1,1){\circle*{1}}
     \put(11,11){\circle*{1}}
     \put(16,26){\circle*{1}}
     \put(21,11){\circle*{1}}
     \put(31,1){\circle*{1}}
     \put(1,1){\line(1,0){30}}
     \put(1,1){\line(3,5){15}}
     \put(1,1){\line(1,1){10}}
     \put(11,11){\line(1,0){10}}
     \put(11,11){\line(1,3){5}}
     \put(16,26){\line(3,-5){15}}
     \put(16,26){\line(1,-3){5}}
     \put(21,11){\line(1,-1){10}}
     \put(11,11){\line(2,-1){20}}
     \end{picture}
    }
\put(-10,9){
     \begin{picture}(32,28)
     \put(1,1){\circle*{1}}
     \put(11,11){\circle*{1}}
     \put(16,26){\circle*{1}}
     \put(21,11){\circle*{1}}
     \put(31,1){\circle*{1}}
     \put(1,1){\line(1,0){30}}
     \put(1,1){\line(3,5){15}}
     \put(1,1){\line(1,1){10}}
     \put(11,11){\line(1,0){10}}
     \put(11,11){\line(1,3){5}}
     \put(16,26){\line(3,-5){15}}
     \put(16,26){\line(1,-3){5}}
     \put(21,11){\line(1,-1){10}}
     \put(1,1){\line(2,1){20}}
     \end{picture}
    }

\put(53,20){\vector(-1,0){30}} \put(52,45){\vector(3,-4){10}}
\put(15,32){\vector(3,4){10}} \put(16,46){$p$} \put(37,75){$q$}
\put(58,46){$r$}


\end{picture}
}


\put(130,112){\begin{picture}(101,80)

\put(4,49){
     \begin{picture}(22,22)
     \put(1,1){\circle*{1}}
     \put(21,1){\circle*{1}}
     \put(1,21){\circle*{1}}
     \put(21,21){\circle*{1}}
     \put(1,1){\line(1,0){20}}
     \put(1,1){\line(0,1){20}}
     \put(1,21){\line(1,0){20}}
     \put(21,1){\line(0,1){20}}
     \put(1,1){\line(1,1){20}}
     \put(1,1){\oval(3.5,3.5)[b]}
       \put(1,1){\oval(3.5,3.5)[tl]}
         \put(1,21){\oval(3.5,3.5)[t]}
           \put(1,21){\oval(3.5,3.5)[bl]}
             \put(21,21){\oval(3.5,3.5)[t]}
               \put(21,21){\oval(3.5,3.5)[rb]}
                 \put(21,1){\oval(3.5,3.5)[b]}
                   \put(21,1){\oval(3.5,3.5)[rt]}
     \end{picture}
    }
\put(44,49){
     \begin{picture}(22,22)
     \put(1,1){\circle*{1}}
     \put(21,1){\circle*{1}}
     \put(1,21){\circle*{1}}
     \put(21,21){\circle*{1}}
     \put(11,6){\circle*{1}}
     \put(1,1){\line(1,0){20}}
     \put(1,1){\line(0,1){20}}
     \put(1,21){\line(1,0){20}}
     \put(21,1){\line(0,1){20}}
     \put(1,1){\line(1,1){20}}
     \put(1,1){\line(2,1){10}}
     \put(11,6){\line(2,3){10}}
     \put(11,6){\line(2,-1){10}}
     \end{picture}
    }
\put(79,29){
     \begin{picture}(22,22)
     \put(1,1){\circle*{1}}
     \put(21,1){\circle*{1}}
     \put(1,21){\circle*{1}}
     \put(21,21){\circle*{1}}
     \put(11,6){\circle*{1}}
     \put(1,1){\line(1,0){20}}
     \put(1,1){\line(0,1){20}}
     \put(1,21){\line(1,0){20}}
     \put(21,1){\line(0,1){20}}
     \put(1,21){\line(2,-3){10}}
     \put(1,1){\line(2,1){10}}
     \put(11,6){\line(2,3){10}}
     \put(11,6){\line(2,-1){10}}
     \end{picture}
    }
\put(44,9){
     \begin{picture}(22,22)
     \put(1,1){\circle*{1}}
     \put(21,1){\circle*{1}}
     \put(1,21){\circle*{1}}
     \put(21,21){\circle*{1}}
     \put(11,6){\circle*{1}}
     \put(1,1){\line(1,0){20}}
     \put(1,1){\line(0,1){20}}
     \put(1,21){\line(1,0){20}}
     \put(21,1){\line(0,1){20}}
     \put(1,21){\line(1,-1){20}}
     \put(1,1){\line(2,1){10}}
     \put(1,21){\line(2,-3){10}}
     \put(11,6){\line(2,-1){10}}
     \end{picture}
    }

\put(4,9){
     \begin{picture}(22,22)
     \put(1,1){\circle*{1}}
     \put(21,1){\circle*{1}}
     \put(1,21){\circle*{1}}
     \put(21,21){\circle*{1}}
     \put(1,1){\line(1,0){20}}
     \put(1,1){\line(0,1){20}}
     \put(1,21){\line(1,0){20}}
     \put(21,1){\line(0,1){20}}
     \put(1,21){\line(1,-1){20}}
     \end{picture}
    }

\put(32,60){\vector(1,0){10}} \put(70,52){\vector(2,-1){10}}
\put(42,20){\vector(-1,0){10}} \put(17,35){\vector(0,1){10}}
\put(80,32){\vector(-2,-1){10}} \put(0.5,45){$p$}
\put(0.5,73){$q$} \put(30.5,73){$r$} \put(30.5,45){$k$}


\end{picture}
}


\put(65,10){\begin{picture}(101,75)

\put(4,49){
     \begin{picture}(22,22)
     \put(6,1){\circle*{1}}
     \put(1,11){\circle*{1}}
     \put(11,21){\circle*{1}}
     \put(16,1){\circle*{1}}
     \put(21,11){\circle*{1}}
     \put(1,11){\line(1,1){10}}
     \put(1,11){\line(1,-2){5}}
     \put(6,1){\line(1,0){10}}
     \put(11,21){\line(1,-1){10}}
     \put(16,1){\line(1,2){5}}
     \put(6,1){\line(1,4){5}}
     \put(6,1){\line(3,2){15}}

     \put(5.7,1){\oval(3.5,3.5)[b]}
     \put(15.7,1){\oval(3.5,3.5)[b]}
     \put(1,11){\oval(3.5,3.5)[l]}
     \put(21,11){\oval(3.5,3.5)[r]}
     \put(10.7,21){\oval(3.5,3.5)[t]}
     \qbezier(3.9,1)(3.9,2.35)(5,2.6)
     \qbezier(17.5,1)(17.5,2.35)(16.6,2.5)
     \qbezier(1,9.25)(1.2,9.25)(1.4,9.6)
     \qbezier(21,9.25)(20.6,9.25)(20.2,9.4)
     \qbezier(1,12.65)(1.5,12.65)(2,12.3)
     \qbezier(21,12.75)(20.5,12.75)(19.7,12.3)
     \qbezier(8.9,21)(8.9,20.1)(9.4,19.7)
     \qbezier(12.4,21)(12.4,20.1)(12,19.7)
     \end{picture}
    }
\put(44,49){
     \begin{picture}(22,22)
     \put(6,1){\circle*{1}}
     \put(1,11){\circle*{1}}
     \put(11,21){\circle*{1}}
     \put(16,1){\circle*{1}}
     \put(21,11){\circle*{1}}
     \put(1,11){\line(1,1){10}}
     \put(1,11){\line(1,-2){5}}
     \put(6,1){\line(1,0){10}}
     \put(11,21){\line(1,-1){10}}
     \put(16,1){\line(1,2){5}}
     \put(1,11){\line(1,0){20}}
     \put(6,1){\line(3,2){15}}
     \end{picture}
    }
\put(79,29){
     \begin{picture}(22,22)
     \put(6,1){\circle*{1}}
     \put(1,11){\circle*{1}}
     \put(11,21){\circle*{1}}
     \put(16,1){\circle*{1}}
     \put(21,11){\circle*{1}}
     \put(1,11){\line(1,1){10}}
     \put(1,11){\line(1,-2){5}}
     \put(6,1){\line(1,0){10}}
     \put(11,21){\line(1,-1){10}}
     \put(16,1){\line(1,2){5}}
     \put(1,11){\line(1,0){20}}
     \put(1,11){\line(3,-2){15}}
     \end{picture}
    }
\put(44,9){
     \begin{picture}(22,22)
     \put(6,1){\circle*{1}}
     \put(1,11){\circle*{1}}
     \put(11,21){\circle*{1}}
     \put(16,1){\circle*{1}}
     \put(21,11){\circle*{1}}
     \put(1,11){\line(1,1){10}}
     \put(1,11){\line(1,-2){5}}
     \put(6,1){\line(1,0){10}}
     \put(11,21){\line(1,-1){10}}
     \put(16,1){\line(1,2){5}}
     \put(11,21){\line(1,-4){5}}
     \put(1,11){\line(3,-2){15}}
     \end{picture}
    }

\put(4,9){
     \begin{picture}(22,22)
     \put(6,1){\circle*{1}}
     \put(1,11){\circle*{1}}
     \put(11,21){\circle*{1}}
     \put(16,1){\circle*{1}}
     \put(21,11){\circle*{1}}
     \put(1,11){\line(1,1){10}}
     \put(1,11){\line(1,-2){5}}
     \put(6,1){\line(1,0){10}}
     \put(11,21){\line(1,-1){10}}
     \put(16,1){\line(1,2){5}}
     \put(6,1){\line(1,4){5}}
     \put(11,21){\line(1,-4){5}}
     \end{picture}
    }

\put(33,60){\vector(1,0){10}} \put(70,52){\vector(2,-1){10}}
\put(43,20){\vector(-1,0){10}} \put(17.5,35){\vector(0,1){10}}
\put(80,32){\vector(-2,-1){10}}

\put(5,45){$p$}

\put(1,63){$q$}

\put(16,74){$r$}

\put(30,63){$k$}

\put(27,45){$l$}


\end{picture}
}

\put(119,6){\textit{c}}

\put(182,106){\textit{b}}

\put(47,106){\textit{a}}
\end{picture}}

\caption{Elementary cycles of the second type}\label{fig_dop}
\end{figure}
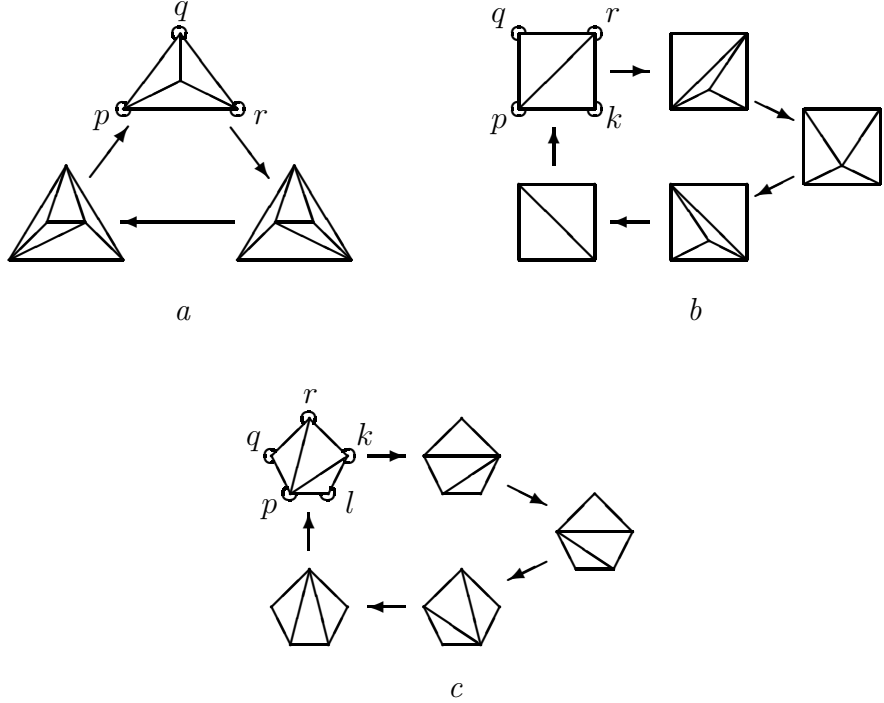

\begin{propos}\label{propos_cycly}
Any cycle of the graph~$\Gamma_2$ can be decomposed into a linear combination of elementary cycles with integral coefficients.
\end{propos}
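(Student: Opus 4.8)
The plan is to prove that the subgroup $E\subseteq Z_1(\Gamma_2)$ generated by all elementary cycles is the whole cycle group, by an explicit reduction. First I would reduce to based loops. Fix a spanning tree of~$\Gamma_2$ (which exists since $\Gamma_2$ is connected by Pachner's theorem). A cycle is determined by its coefficients on the non-tree edges, so it equals the corresponding integral combination of fundamental cycles; each fundamental cycle is a closed edge-path, and, since for a path $p$ from a fixed vertex $v_0$ to the basepoint of a loop $c$ one has $p+c+p^{-1}=c$ as $1$-chains, each such loop equals, as a chain, a loop based at $v_0$. Take $v_0=\{\partial\Delta^3\}$, the class of the boundary of the $3$-simplex. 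It therefore suffices to prove that every closed sequence of essential bistellar moves $\beta_1,\dots,\beta_N$ with $L_0=\partial\Delta^3$, $L_i=\beta_i(L_{i-1})$ and $L_N\cong\partial\Delta^3$ defines a chain lying in~$E$. Recall that in dimension~$2$ each $\beta_i$ is one of: the stellar subdivision of a triangle (introducing one new vertex), its inverse (deleting a degree-$3$ vertex), or an edge flip (leaving the vertex set unchanged).

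To such a loop I would attach the pair $(\mu,k)$, ordered lexicographically, with $\mu=\max_i\#V(L_i)$ and $k$ the number of indices $i$ with $\#V(L_i)=\mu$, and argue by induction on $(\mu,k)$: replacing the loop by (loop $\pm$ an elementary cycle) does not affect its class modulo~$E$, so it is enough to make $(\mu,k)$ strictly drop whenever $\mu>4$. If $\mu=4$ the loop is empty, since the only bistellar moves available on $\partial\Delta^3$ are subdivisions. If $\mu>4$, choose a maximal run $L_a,\dots,L_b$ of consecutive triangulations at level~$\mu$; since $\#V(L_0)=\#V(L_N)=4$ it is entered by a subdivision $\beta_a$ creating a vertex $v$ (so $v$ has degree~$3$ in $L_a$), left by an inverse subdivision $\beta_{b+1}$ deleting a degree-$3$ vertex $w$, and the moves strictly between are flips. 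One checks that no flip in $L_a$ removes an edge incident to~$v$; hence the move $\beta_{a+1}$ (a flip when $a<b$, the deletion of $w$ when $a=b$) is, except in one explicitly described local pattern, \emph{independent} of $\beta_a$ in exactly the sense of conditions~(1)--(3) in the definition of $\gamma(L,\sigma_1,\sigma_2)$, with $\sigma_1$ the subdivided triangle and $\sigma_2$ the simplex carrying the other move. In the independent case I would interchange $\beta_a$ and $\beta_{a+1}$; the difference between the old loop and the new one is $\pm\gamma(L_{a-1},\sigma_1,\sigma_2)$, an elementary cycle of the first type, and the new loop visits a triangulation of lower vertex count in place of $L_a$, so $k$ (or, if $L_a$ was the only level-$\mu$ triangulation, $\mu$) decreases. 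The exceptional patterns are: $a=b$ and $w=v$, in which case $\beta_{b+1}=\beta_a^{-1}$ and the two moves may be erased outright because $\{\beta_a\}+\{\beta_{b+1}\}=0$; and the case in which $\beta_{a+1}$ flips an edge of the boundary of the just-subdivided triangle, thereby joining $v$ to the opposite apex. This last, genuinely entangled, configuration is resolved by one of the elementary cycles of the second type of Figure~\ref{fig_dop}, again lowering $(\mu,k)$. Iterating produces the required decomposition; a conservative reader may instead stop the recursion once the loop is confined to the finite subgraph of $2$-spheres with at most a fixed small number of vertices, and finish by a direct computation of the cycle group of that finite graph.

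The step I expect to be the main obstacle is precisely this local analysis at a peak of the vertex-count profile. One must enumerate, up to orientation-preserving isomorphism, all relative positions of a vertex-creating (or vertex-deleting) move and the neighbouring move, separate the commuting positions — which account for the nine local shapes \textit{a--i} of first-type cycles in Figure~\ref{fig_comm} — from the finitely many entangled ones handled by Figure~\ref{fig_dop}, and throughout keep precise track of orientations: because of the conventions $\{\beta^{-1}\}=-\{\beta\}$ for edges and $\langle-L\rangle=-\langle L\rangle$ for vertices, the difference of two loops must come out to be $\pm$ a \emph{single} named elementary cycle rather than merely homologous to some combination of them, and a sign error here is easy to make. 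This is also the bookkeeping that is reused, in Section~\ref{section_explicit}, to pin down the explicit local formula for the first Pontryagin class.
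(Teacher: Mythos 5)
Your reduction to based loops and your handling of the genuinely commuting positions are fine, but the argument breaks at exactly the step you flag as the main obstacle, and it is not a matter of bookkeeping. In the entangled case, where $\beta_{a+1}$ flips an edge $xy$ of the just-subdivided triangle $xyz$ and replaces it by $vw$ ($w$ the opposite apex), you assert that the configuration is ``resolved by one of the elementary cycles of the second type, again lowering $(\mu,k)$''. No elementary cycle does this. The commutation squares are excluded by condition~(1) in the definition of $\gamma(L,\sigma_1,\sigma_2)$, since $\sigma_1=xyz$ contains $xy$; the triangle of Fig.~\ref{fig_dop},~\textit{a} requires a suitably placed degree-$3$ vertex in $L_{a-1}$, which need not exist; and the pentagon of Fig.~\ref{fig_dop},~\textit{b}, which is the relevant relation, only rewrites the segment $[\,\text{subdivide }xyz\text{ at }v;\ \text{flip }xy\to vw\,]$ as $[\,\text{flip }xy\to zw;\ \text{subdivide};\ \text{flip}\,]$ --- a preliminary flip at the lower level followed by the same entangled pattern, mirrored. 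The number of level-$\mu$ triangulations along the loop is unchanged, so $(\mu,k)$ does not drop; moreover this rewriting is not even available when $zw$ is already an edge of $L_{a-1}$. Since in $L_{a+1}$ the new vertex has degree $4$ while a stellar subdivision always creates a vertex of degree $3$, there is in general no short detour through spheres with fewer vertices at all, and your induction stalls. Your fallback (stop once the loop lies in a finite subgraph of spheres with few vertices) does not help, because the entangled configurations occur at arbitrarily large vertex counts, before the reduction ever reaches such a subgraph.

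The underlying issue is that the measure $(\mu,k)$ is too coarse: on a plateau of flips at the top vertex count the real task is to reorganize the flips among themselves, and first-type cycles only exchange moves with disjoint supports. This is precisely why the paper filters $\Gamma_2$ by a finer complexity, recording not only the number of vertices but also the minimal vertex degree ($3$, $4$, or $\ge 5$), and treats relative cycles of consecutive filtration levels; its hardest case --- spheres all of whose vertex degrees are at least $5$, handled by exploiting the existence of at least $12$ vertices of degree $5$ and hence many complexity-decreasing flips, so that any difference of two such moves can be joined through admissible commutation cycles --- has no counterpart in your sketch. To repair your scheme you would have to track, in addition to $(\mu,k)$, how the degree of the newly created vertex evolves through successive entangled flips, which in effect reconstructs the paper's complexity function; as written, the proposal does not prove the proposition.
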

\begin{proof}[Sketch of proof]
It will be convenient to us to give the following auxiliary definition.
Let $L$ be a two-dimensional combinatorial sphere with $k$ vertices. We shall say that its \textit{complexity} $a(L)$ is equal to~$k$ if $L$ contains at least one vertex of degree~$3$, is equal to~$k+\frac13$ if~$L$ contains no vertices of degree~$3$ and contains at least one vertex of degree~$4$, and is equal to~$k+\frac23$ if $L$ contains no vertices of degrees~$3$ and~$4$. The Euler formula implies that, in the latter case, $L$ contains at least $12$ vertices of degree~$5$. Thus the complexity of a combinatorial sphere is an element of the set~$\frac13\Z_{\ge 0}$. Let $\beta$ be a bistellar move transforming a two-dimensional combinatorial sphere~$L_1$ to a two-dimensional combinatorial sphere~$L_2$. We put
$a(\beta)=\max\bigl(a(L_1),a(L_2)\bigr)$ if $a(L_1)\ne a(L_2)$ and
$a(\beta)=a(L_1)+\frac16$ if $a(L_1)= a(L_2)$. Then the \textit{complexity}~$a(\beta)$ of a bistellar move~$\beta$ is an element of the set~$\frac16\Z_{\ge 0}$. By~$\Gamma_2^a$, $a\in\frac16\Z_{\ge 0}$, we denote the subgraph of~$\Gamma_2$ consisting of all vertices and edges of complexity not greater than~$a$.

Obviously, to prove the proposition we suffice to prove that any relative cellular cycle of the pair~$\bigl(\Gamma^{a\vphantom{\frac16}}_2,\Gamma^{a-\frac16}_2\bigr)$ can be presented as a sum of a linear combination of elementary cycles and a cellular chain of the graph~$\Gamma^{a-\frac16}_2$. This assertion should be proved separately for each of the six cases $a\in\Z_{\ge 0}+\frac{b}6$, $b=0,1,2,3,4,5$. Below we shall consider in details the cases~$b=1$ and~$b=4$. The case~$b=0$ is trivial; the case $b=2$ is almost similar to the case~$b=4$; the cases~$b=3$ and~$b=5$ are almost similar to the case~$b=1$. (The details of the proof in different cases slightly differ.)

{\sl Case $a=k+\frac16$, $k\in\Z$.} Notice that the sets of vertices of the graphs~$\Gamma^{k+\frac16}_2$ and~$\Gamma^{k\vphantom{\frac16}}_2$ coincide. Hence the group of relative cycles of the pair
$\bigl(\Gamma^{k+\frac16}_2,\Gamma^{k\vphantom{\frac16}}_2\bigr)$ is generated by edges~$\{\beta\}$ such that~$\beta$ is an essential bistellar move transforming~$L_1$ to~$L_2$, where either of the combinatorial spheres~$L_1$ and~$L_2$ has~$k$ vertices and contains a vertex of degree~$3$. Then
$\beta=\beta_{L_1,\sigma}$, where $\dim\sigma=1$. We shall consider two subcases.

1. There is a vertex $v\in V(L_1)=V(L_2)$ such that the degree of~$v$ in either of the triangulations~$L_1$ and~$L_2$ is equal to~$3$.
Then $v\notin U(\beta)$. Hence the cycle~$\gamma(L_1,\sigma,v)$ is well defined. Now we suffice to notice that the support of the chain $\{\beta\}-\gamma(L_1,\sigma,v)$ is contained in the graph~$\Gamma^k_2$.

2. There is not a vertex $v\in V(L_1)=V(L_2)$ such that the degree of~$v$ in either of the triangulations~$L_1$ and~$L_2$ is equal to~$3$.
Then there are two vertices $v_1,v_2\in U(\beta)$ such that the degrees of~$v_1$ in the triangulations~$L_1$ and~$L_2$ are equal to~$3$ and~$4$ respectively and the degrees of~$v_2$ in the triangulations~$L_1$ and~$L_2$ are equal to~$4$ and~$3$ respectively. Then the bistellar move~$\beta$ is arranged either as the bistellar move shown in the bottom of Fig.~\ref{fig_dop},~\textit{a} or as the bistellar move inverse to the bistellar move shown in the bottom of Fig.~\ref{fig_dop},~\textit{a}. Therefore, either adding to~$\{\beta\}$ or subtracting from~$\{\beta\}$ the cycle shown in this figure, we obtain a chain with support in~$\Gamma^k_2$.

{\sl Case $a=k+\frac23$, $k\in\Z$.} The group of relative cycles of~$\bigl(\Gamma^{k+\frac23}_2,\Gamma^{k+\frac12}_2\bigr)$ is generated by elements $\{\beta_1\}-\{\beta_2\}$ such that $\beta_1$ and $\beta_2$ are bistellar moves transforming~$L$ to~$L_1$ and~$L_2$ respectively, where $L$ is an oriented combinatorial sphere with
$k$ vertices without vertices of degrees~$3$ and~$4$, $L_1$ and
$L_2$ are oriented combinatorial spheres either of which has $k$ vertices and contains a vertex of degree~$4$. Then, for $i=1,2$, we can see that $\beta_i=\beta_{L,\sigma_i}$, where $\sigma_i$ is an edge entering a vertex of degree~$5$. The triangulation~$L$ contains at least $12$ vertices of degree~$5$. Hence the triangulation~$L$ contains rather many (at least~$36$) edges~$\sigma$ such that $\beta_{L,\sigma}$ is a bistellar move of complexity~$k+\frac23$. By an easy analysis of cases one can show that the group of relative cycles of the pair~$(\Gamma^{k+\frac23}_2,\Gamma^{k+\frac12}_2)$ is generated by those  differences~$\{\beta_1\}-\{\beta_2\}$ for which the cycle~$\gamma(L,\sigma_1,\sigma_2)$ is well defined.
Then the support of the chain
$$
\{\beta_1\}-\{\beta_2\}-\gamma(L,\sigma_1,\sigma_2)
$$
is contained in the graph~$\Gamma_2^{k+\frac12}$.
\end{proof}
\begin{remark}
Notice that the above proof of Proposition~\ref{propos_cycly} actually gives us an explicit algorithm for decomposing a given cycle of the graph~$\Gamma_2$ into a linear combination of elementary cycles. This algorithm is important for finding explicit local formula for the first Pontryagin class. Indeed, it can be easily proved that elementary cycles of the second type can be excluded because they can be presented as linear combinations of elementary cycles of the first type.
Nevertheless, it is convenient to use elementary cycles of both types because an algorithm for decomposing a given cycle into a linear combination of elementary cycles of the first type only is more complicated. Proposition~\ref{propos_cycly} has been initially proved by the author in~\cite{Gai04} using another method based on realisation of two-dimensional combinatorial spheres as the boundaries of simplicial convex polytopes in~$\R^3$ and the Steinitz theorem claiming that, for any two such realisations, the first one can be deformed to the second one in the class of simplicial convex polytopes. This proof also yields an explicit algorithm for decomposing a given cycle. However, this algorithm is much more complicated than the algorithm described above.
\end{remark}
\begin{remark}
Attach to the graph~$\Gamma_2$ a two-dimensional cell along every elementary cycle. Denote by~$X$ the obtained two-dimensional complex. Proposition~\ref{propos_cycly} claims that $H_1(X;\Z)=0$. It can be easily shown that the above proof of Proposition~\ref{propos_cycly} actually provides that the complex~$X$ is simply connected.
\end{remark}

\section{Explicit formula for the first Pontryagin class}
\label{section_explicit}

As we mentioned in the end of section~\ref{section_locform}, elements of the group
$$
B=\ker\bigl[\delta:\T^4(\Q)\to
\T^5(\Q)\bigr]=\ker\bigl[\delta:C^0(\Gamma_3;\CQ)\to
C^0(\Gamma_4;\CQ)\bigr]
$$
are exactly local formulae for the first rational Pontryagin class multiplied by some rational number. Therefore we need to describe explicitly the group~$B$. The equality $d=\delta s-s\delta$ implies that $d|_B=\delta s|_B$ and
$d|_{C^0_{\Z_2}(\Gamma_2;\CQ)}=-s\delta|_{C^0_{\Z_2}(\Gamma_2;\CQ)}$.
But $d$ is a monomorphism. Hence the homomorphism~$s|_B$ is a monomorphism and the homomorphism~$s|_{\delta\bigl(C^0_{\Z_2}(\Gamma_2;\CQ)\bigr)}$ is an isomorphism onto the group~$d\bigl(C^0_{\Z_2}(\Gamma_2;\CQ)\bigr)$. Besides, the group~$s(B)$ is contained in the subgroup~$A\subset C^1_{\Z_2}(\Gamma_2;\CQ)$ consisting of all cocycles whose homology classes belong to the group
$$
N=\ker\bigl[\delta^*: H^1_{\Z_2}(\Gamma_2;\CQ)\to
H^1_{\Z_2}(\Gamma_3;\CQ)\bigr],
$$
where $\delta^*$ is the mapping in cohomology induced by the chain mapping~$\delta$ of cochain complexes.

The group~$N$ has been computed by the author in~\cite{Gai04}. It is a one-dimensional $\Q$-module generated by the cohomology class~$c$ whose values on elementary cycles are given in Table~\ref{table}. The right column of this table contains the values of~$c$ on the cycles shown in the figures whose numbers are written in the left column. These values depend on the numbers of triangles containing the vertices shown in the corresponding figures and situated inside the angles marked by arcs. The number of simplices inside the angle is indicated by the letter written near the corresponding arc. The functions~$\rho$ and~$\omega$ are given by
\begin{gather*}
\rho(p,q)=\frac{q-p}{(p+q+2)(p+q+3)(p+q+4)};\\
\omega(p)=\frac{1}{(p+2)(p+3)}
\end{gather*}
Proposition~\ref{propos_cycly} easily implies that the class~$c$ is solely determined by its values on elementary cycles.

\begin{table}
\caption{Values of~$c$ on elementary cycles}\label{table}
\begin{tabular}{|l|c|}
\hline \ref{fig_comm}, \textit{a}, \textit{d}, \textit{g} & $0\vphantom{\underline{\frac12}^0}$\\
\hline \ref{fig_comm}, \textit{b}, \textit{e}, \textit{h} & $\rho(p,q)\vphantom{\underline{\frac12}^0}$\\
\hline \ref{fig_comm}, \textit{c}, \textit{i} & $\rho(0,q)-\rho(0,p)\vphantom{\underline{\frac12}^0}$\\
\hline \ref{fig_comm}, \textit{f} & $\rho(0,q)+\rho(0,p)\vphantom{\underline{\frac12}^0}$\\
\hline \ref{fig_dop}, \textit{a} & $\omega(p)-\omega(q)+\omega(r)-\frac1{12}\vphantom{\underline{\frac12}^0}$\\
\hline \ref{fig_dop}, \textit{b} & $\omega(p)-\omega(q)-\omega(r)+\omega(k)\vphantom{\underline{\frac12}^0}$\\
\hline \ref{fig_dop}, \textit{c} & $\omega(p)+\omega(q)+\omega(r)+\omega(k)+\omega(l)-\frac1{12}\vphantom{\underline{\frac12}^0}$\\
\hline
\end{tabular}
\end{table}

\begin{remark}
Notice that a function very similar to the function~$\rho$ has appeared in the work~\cite{Kaz98} by Kazarian in the formula for the Euler class of $S^1$-bundle from degeneracies of the restriction of the Morse function on the total space to fibres.
\end{remark}

Since the dimension of the $\Q$-module~$N$ is equal to~$1$, it follows that the $\Q$-module $d\bigl(C^0_{\Z_2}(\Gamma_2;\CQ)\bigr)$ has codimension~$1$ in the $\Q$-module~$A$. On the other hand, Theorem~\ref{theorem_isom} implies that the $\Q$-module~$\delta
\bigl(C^0_{\Z_2}(\Gamma_2;\CQ)\bigr)$ has codimension~$1$ in the 
$\Q$-module~$B$. Hence the monomorphism $s|_B:B\to A$ is an isomorphism. Besides, $(s|_B)^{-1}=d^{-1}\delta|_A$. Therefore, for any cocycle $\hc\in C^1_{\Z_2}(\Gamma_2;\CQ)$ representing the cohomology class~$c$, the function $f=d^{-1}\delta(\hc)\in \T^4(\Q)$ is a local formula for the class~$\lambda p_1$, where $\lambda$ is a certain rational constant. To compute~$\lambda$, we need to compute the sum $\sum_{v\in V(K)}f\bigl(\langle\Lk v\rangle\bigr)$ for some oriented four-dimensional combinatorial manifold~$K$ with non-zero first Pontryagin number and to compare the value computed with the first Pontryagin number of~$K$. A direct computation for the $9$-vertex triangulation of~$\mathbb{C}\mathrm{P}^2$ constructed by K\"uhnel and Banchoff~\cite{KB83} yields~$\lambda=1$. Thus local formulae for the first Pontryagin class are in one-to-one correspondence with cocycles~$\hc$ representing the cohomology class~$c$. This correspondence is provided by the two mutually inverse mappings~$s|_B$ and~$d^{-1}\delta|_A$.

To describe explicitly a concrete local formula~$f$ for the first rational Pontryagin class we need to choose a cocycle~$h$ representing the cohomology class~$c$. This can be done, for example, in the following way. For every vertex~$\{ L\}$ of the graph~$\Gamma_2$, we shall point out explicitly a chain $\xi_{\{ L\}}\in
C_1(\Gamma_2;\Z)$ such that $\partial \xi_{\{ L\}}=\{ L\}-
\{\partial\Delta^3\}$. We put, $\xi_{\{\partial\Delta^3\}}=0$.
We shall give a recurrent formula for the computation of the chain $\xi_{\{ L\}}$ from the chains $\xi$ for vertices of less complexity. Let $\beta_1,\beta_2,\ldots,\beta_{r}$ be all bistellar moves decreasing the complexity of the combinatorial sphere~$L$. (It is easy to prove that such bistellar moves always exist.) We put, $L_i=\beta_i(L)$, $i=1,2,\ldots,r$. The chain~$\xi_{\{L\}}$ is determined by the formula 
$$
\xi_{\{L\}}=\frac1r\sum_{j=1}^r\bigl(\xi_{\{L_j\}}-\{\beta_j\}\bigr).
$$
Now the cocylce~$h$ is defined by
$$
h(\{\beta\})=\left\langle
c,\{\beta\}+\xi_{\{L\}}-\xi_{\{\beta(L)\}}\right\rangle,
$$
where $L$ is the initial combinatorial sphere of the bistellar move~$\beta$.

The procedure for computing the value~$f\bigl(\langle L\rangle\bigr)$
is as follows.

1. We find a sequence of bistellar moves
\begin{equation}\label{eq_bistellar}
\partial\Delta^4=L_1\stackrel{\beta_1}{\rightsquigarrow}L_2
\stackrel{\beta_2}{\rightsquigarrow}\ldots
\stackrel{\beta_{k-1}}{\rightsquigarrow}L_k
\stackrel{\beta_k}{\rightsquigarrow}L_{k+1}\cong L
\end{equation}
transforming the boundary of a $4$-dimensional simplex to~$L$.

2. We put,
$$
\eta=\sum_{j=1}^k\sum_{v\in
U(\beta_j)}\left\{(\beta_j)_v\right\}\in C_1(\Gamma_2;\Z).
$$

3. We compute recurrently the chains~$\xi_{\{\Lk v\}}$ for all~$v\in
V(L)$. Then the chain $\zeta=\eta-\sum_{v\in V(L)}\xi_{\{\Lk v\}}$ is a cycle.

4. We decompose the cycle~$\zeta$ as a linear combination 
$\sum_{i=1}^ln_i\gamma_i$, where $n_i\in\Z$ and $\gamma_i$ are elementary cycles (see the proof of Proposition~\ref{propos_cycly}).

5. Then we have
$$
f\bigl(\langle L\rangle\bigr)=\sum_{i=1}^ln_i\langle
c,\gamma_i\rangle,
$$
where the values $\langle c,\gamma_i\rangle$ are taken from Table~\ref{table}.

Now the cycle~$f_{\sharp}(K)$ representing the Poincar\'e dual of the first rational Pontryagin class of an oriented combinatorial manifold~$K$ can be computed by the universal local formula~(\ref{eq_uniform}).

In the algorithm described steps~1 and~3 are the most complicated. It follows from Pachner's theorem that sequence~(\ref{eq_bistellar}) exists for any three-dimensional combinatorial sphere~$L$. This sequence can be found by performing all possible bistellar moves starting from~$L$. However, such algorithm is extremely complicated. Using some empirical rules for finding simplifying bistellar moves Bj\"orner and Lutz created a computer program BISTELLAR (see~\cite{BL00}). In particular, this program allows to find rather effectively sequences~(\ref{eq_bistellar}) for combinatorial spheres with small number of vertices. However, no theoretical estimate for the number of bistellar moves in a sequence~(\ref{eq_bistellar}) for a given combinatorial sphere~$L$ is known.

The complexity of step~$3$ consists in branching of the recurrence. While the complexity of step~$1$ seems to be unavoidable, the complexity of step~$3$ is caused by our wish to obtain the universal local formula~(\ref{eq_uniform}). This step can be essentially simplified if we do not require our formula to be local. Let us describe the simplified procedure for computing a simplicial cycle~$Z$ representing the Poincar\'e dual of the first rational Pontryagin class of an oriented $m$-dimensional combinatorial manifold~$K$.

1. For every simplex~$\sigma\in K$ of codimension either~$3$ or~$4$, we find a sequence of bistellar moves
\begin{equation}\label{eq_bistellar2}
\partial\Delta^{\codim\sigma-1}=L_1^{(\sigma)}
\stackrel{\beta_1^{(\sigma)}}{\rightsquigarrow}L_2^{(\sigma)}
\stackrel{\beta_2^{(\sigma)}}{\rightsquigarrow}\ldots
\stackrel{\beta_{k(\sigma)}^{(\sigma)}}{\rightsquigarrow}L_{k(\sigma)+1}^{(\sigma)}\cong
\Lk\sigma.
\end{equation}

2. For every codimension~$4$ simplex~$\sigma\in K$  we choose an arbitrary orientation of it. We endow every codimension~$3$ simplex~$\tau\in K$ such that $\tau\supset\sigma$ with the orientation so as to obtain that the incidence coefficients of simplices~$\sigma$ and~$\tau$ is equal to~$+1$. (A simplex $\tau$ can have different orientations for different simplices $\sigma$ containing it.) We endow the combinatorial sphere~$L_j^{(\sigma)}$ and the combinatorial spheres~$L_j^{(\tau)}$ with orientations agreed with the chosen orientations of simplices~$\sigma$ and~$\tau$. We put,
$$
\zeta_{\sigma}=\sum_{j=1}^{k(\sigma)}\sum_{v\in
U\left(\beta_j^{(\sigma)}\right)}\left\{\left(\beta_j^{(\sigma)}\right)_v\right\}-
\mathop{\sum_{\tau\in
K,\,\tau\supset\sigma,}}\limits_{\codim\tau=3}
\sum_{j=1}^{k(\tau)}\left\{\beta_j^{(\tau)}\right\}.
$$
Then $\zeta_{\sigma}$ is a cycle.

3. For every $\sigma$, we decompose the cycle $\zeta_{\sigma}$ into a linear combination of elementary cycles and compute the value~$r_{\sigma}=\langle c,\zeta_{\sigma}\rangle$. Then the required cycle is given by
$$
Z=\sum_{\sigma\in K,\,\codim\sigma=4}r_{\sigma}\sigma.
$$

\section{Conclusion}
\label{section_zakluch}

Notice that the role played by bistellar moves in the formula described in sections~\ref{section_locform}--\ref{section_explicit} is very similar to the role played by configuration spaces in the Gabrielov--Gelfand--Losik formula. Actually, a three-dimensional combinatorial sphere is a very abstract object, which is almost impossible to work with without relating it to some simpler object. In the Gabrielov--Gelfand--Losik formula such relation is provided by a linear on simplices embedding of the cone over a three-dimensional combinatorial sphere into~$\R^4$. In the author's formula such relation is provided by a sequence of bistellar move transforming a three-dimensional combinatorial sphere to the boundary of a four-dimensional simplex. An advantage of a sequence of bistellar moves consists in the fact that it exists for any combinatorial sphere. Indeed, both bistellar moves and configuration spaces are organized rather easy for combinatorial spheres of dimensions~$\le 2$ and turn out to be much more complicated for three-dimensional spheres. This is the main reason for complexity of both combinatorial formulae considered in this paper.

In conclusion I wish to thank V.\,M.~Buchstaber who suggested me a problem on combinatorial formulae for the Pontryagin classes several years ago. I am also grateful to G.\,I.~Sharygin for useful discussion.

\end{document}